\theoremstyle{plain}
\newtheorem{theorem}{Theorem}
\newtheorem{lemma}[theorem]{Lemma}
\newtheorem{corollary}[theorem]{Corollary}
\theoremstyle{definition}
\newtheorem{definition}[theorem]{Definition}
\theoremstyle{remark}
\numberwithin{equation}{section}
\numberwithin{theorem}{section}
\numberwithin{conjecture}{section}
\newcommand{\x}{\scalebox{1.2}{$\chi$} } 
\newcommand{\br}{\overline}
\newcommand{\R}{\mathbb R}
\newcommand{\C}{\mathbb C}
\newcommand{\Z}{\mathbb Z}
\newcommand{\N}{\mathbb N}
\newcommand{\h}{\mathscr H}
\DeclareMathOperator{\dist}{{\mathrm{dist}}}
\DeclareMathOperator{\diam}{{\mathrm{diam}}}
\DeclareMathOperator{\md}{\mathrm{Mod}}
\DeclareMathOperator{\loc}{\mathrm{loc}}
\DeclareMathOperator{\NED}{\mathit{NED}}
\DeclareMathOperator{\CNED}{\mathit{CNED}}
\begin{document}
\title{Metric definition of quasiconformality and exceptional sets}

\author{Dimitrios Ntalampekos}
\address{Mathematics Department, Stony Brook University, Stony Brook, NY 11794, USA.}

\thanks{The author was partially supported by NSF Grants DMS-2000096 and DMS-2246485, and by the Simons Foundation.}
\email{dimitrios.ntalampekos@stonybrook.edu}

\date{\today}
\keywords{Quasiconformal map, metric definition, eccentricity, negligible set, exceptional set, covering lemma}
\subjclass[2020]{Primary 30C62, 30C65; Secondary 31A15, 31B15.}

\begin{abstract}
We show that a homeomorphism of Euclidean space is quasiconformal if and only if at each point there exists a sequence of uncentered open sets with bounded eccentricity shrinking to that point whose images also have bounded eccentricity. This generalizes the metric definition of quasiconformality of Gehring that uses balls instead. We also study exceptional sets for this definition, in connection with sets that are negligible for extremal distances. We introduce the class of $\CNED$ sets, generalizing the classical notion of $\NED$ sets studied by Ahlfors--Beurling. A set $A$ is $\CNED$ if the conformal modulus of a curve family is not affected when one restricts to the subfamily intersecting the set $A$ at countably many points.  We show as our main theorem that $\CNED$ sets are exceptional for the definition of quasiconformality. 
\end{abstract}

\maketitle

\section{Introduction}

\subsection{A new definition of quasiconformality}
We assume throughout that $n\geq 2$. Let $\Omega\subset \R^n$ be an open set and $f\colon \Omega\to \R^n$ be a topological embedding, i.e., a homeomorphism onto its image, that is orientation-preserving.  We say that $f$ is quasiconformal if  $f\in W^{1,n}_{\loc}(\Omega)$ and  there exists $K\geq 1$ such that
\begin{align*}
\|Df(x)\|^n \leq KJ_f(x)
\end{align*}
for a.e.\ $x\in \Omega$. In this case, we say that $f$ is $K$-quasiconformal. This is known as the \textit{analytic} definition of quasiconformality. In what follows all topological embeddings are implicitly assumed to be orientation-preserving.

We define the \textit{metric distortion} of $f$ at a point $x\in \Omega$ by  
\begin{align}\label{intro:definition_metric}
H_f(x)= \limsup_{r\to 0} \frac{L_f(x,r)}{l_f(x,r)},
\end{align}
where, for $r>0$,
\begin{align*}
L_f(x,r)&=\sup\{|f(x)-f(y)|: y\in \Omega,\,\, |x-y|\leq r\}\quad \textrm{and}\\
l_f(x,r)&=\inf\{|f(x)-f(y)|: y\in \Omega,\,\, |x-y|\geq r\}.
\end{align*}
By a result of Gehring \cite{Gehring:Rings}*{Corollary 3},  $f$ is quasiconformal if and only if there exists $H\geq 1$ such that $H_f(x)\leq H$ for every $x\in \Omega$. This is known as the \textit{metric} definition of quasiconformality.  Geometrically, it says that $f$ maps \textit{all} sufficiently small balls centered at $x$ to topological balls with bounded \textit{eccentricity}. The eccentricity of a bounded open set $A\subset \R^n$ is by definition 
$$E(A)= \inf\{M\geq 1: \textrm{there exists an open ball}\,\, B \,\,\textrm{such that}\,\, B\subset A\subset MB \}.$$
Observe that the eccentricity of a ball is $1$ and if $B(x,r)\subset \br B(x,r)\subset  \Omega$, then 
\begin{align*}
E( f(B(x,r))) \leq \frac{L_f(x,r)}{l_f(x,r)}.
\end{align*} 
The reverse inequality is not true in general. If $f$ is quasiconformal then  
$$\limsup_{r\to 0}E( f(B(x,r)))$$
is uniformly bounded in $\Omega$.

A fundamental theorem proved by Heinonen--Koskela \cite{HeinonenKoskela:liminf} is that the ``limsup" in the definition of $H_f$ in \eqref{intro:definition_metric} can be replaced by ``liminf". Thus, only \textit{a sequence} of balls centered at $x$ and shrinking to $x$ is required to be mapped under $f$ to sets with bounded eccentricity. This  significant result was immediately applied in rigidity problems in complex dynamics in the work of Przytycki--Rohde \cite{PrzytyckiRohde:ColletEckmann} and in further works that we mention below. 

One natural question is whether one can define quasiconformality by requiring the symmetric condition that arbitrary sets of bounded eccentricity and not necessarily balls are mapped to sets of bounded eccentricity. We prove here that this is indeed the case. We first provide a definition. 

\begin{definition}\label{definition:eccentric}
Let $\Omega\subset \R^n$ be an open set and $f\colon \Omega \to \R^n$ be a topological embedding. The \textit{eccentric distortion of $f$} at a point $x\in \Omega$ is defined by
\begin{align*}
E_f(x) =\inf \{ M\geq 1:\,\, &\textrm{there exists a sequence of open sets $A_k\subset \Omega$, $k\in \N$,} \\
&\textrm{with $x\in A_k$, $k\in \N$, and $\diam(A_k)\to 0$ as $k\to\infty$}\\ &\textrm{such that $E(A_k)\leq M$ and $E(f(A_k))\leq M$ for all $k\in \N$}\},
\end{align*}
\end{definition}

Observe that 
$$E_f(x)\leq H_f(x)$$
for each $x\in \Omega$, thus, quasiconformal maps have uniformly bounded eccentric distortion. We prove that the converse is true. 

\begin{theorem}\label{theorem:eccentric}
Let $\Omega\subset \R^n$ be an open set and $f\colon \Omega \to \R^n$ be a topological embedding. Suppose that there exists a constant $H\geq 1$ such that for all $x\in \Omega$ we have
\begin{align}\label{theorem:eccentric:condition}
E_f(x)\leq H.
\end{align}
Then $f$ is quasiconformal in $\Omega$. 
\end{theorem}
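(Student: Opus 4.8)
The plan is to establish the \emph{geometric} definition of quasiconformality, i.e.\ to produce a constant $K=K(n,H)$ such that $K^{-1}\md(\Gamma)\le \md(f\Gamma)\le K\md(\Gamma)$ for every curve family $\Gamma$ in $\Omega$; by the classical equivalence of the geometric and analytic definitions in $\R^n$ (together with the Sobolev regularity these bounds entail) this yields that $f$ is quasiconformal. It helps to first rewrite the hypothesis: since $E_f(x,\cdot)$ is non-increasing, condition \eqref{theorem:eccentric:condition} is equivalent to requiring that for \emph{every} $x\in\Omega$, every $0<r<\dist(x,\partial\Omega)/3$, and every $\varepsilon>0$ there is an open set $A$ with $x\in A$, $\diam A\le 2r$, $E(A)\le H+\varepsilon$ and $E(f(A))\le H+\varepsilon$. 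Thus at every point and every scale we are handed a ``round'' set whose image is again ``round'' --- with the crucial caveat that $x$ need not be the center of $A$. (An equivalent target would be to show that $f$ is locally quasisymmetric with data depending only on $n$ and $H$, since a locally quasisymmetric homeomorphism between domains in $\R^n$ is quasiconformal.)

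The core is a covering estimate that turns this pointwise data into the modulus inequality. To prove, say, $\md(f\Gamma)\le K\md(\Gamma)$, I would take $\rho$ admissible for $\Gamma$ with $\int_\Omega\rho^n\,dm\le\md(\Gamma)+\varepsilon$ and build a density $\bar\rho$ on $\Omega'=f(\Omega)$ that is admissible for $f\Gamma$ and obeys $\int_{\Omega'}\bar\rho^n\,dm\le K\int_\Omega\rho^n\,dm$, by transporting $\rho$ through the sets $A$ furnished by the hypothesis: morally, $\bar\rho(f(x))$ is a restricted maximal function of $\rho$ over the controlled small sets $A\ni x$, normalized by $\diam f(A)$. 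Here the eccentricity bounds do the real work, because on a set $A$ with $E(A)\le H+\varepsilon$ one has $\diam(A)^n\asymp m(A)$ (and likewise $\diam(f(A))^n\asymp m(f(A))$) with constants depending only on $n$ and $H$; this is what lets the diameter bookkeeping on $\Omega'$ be converted back into an $L^n$ bound against $\rho$. Admissibility of $\bar\rho$ along a curve $\gamma\in\Gamma$ is checked by chaining controlled sets $A_j$ that meet $\gamma$, so that $\int_{f\gamma}\bar\rho\,ds\gtrsim\sum_j\int_{A_j}\rho\,dm\gtrsim\int_\gamma\rho\,ds\ge 1$; the $L^n$ estimate is checked on a disjointed Vitali subfamily of the $A$'s, using injectivity of $f$ (so the images stay disjoint), the comparabilities above, and H\"older's inequality. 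The two standing tools are a Vitali covering theorem for families of sets of bounded eccentricity --- which is exactly what makes it harmless that the sets $A$ are uncentered --- and the measure--diameter comparability on such sets, available on both sides of $f$ by hypothesis. The reverse inequality $\md(\Gamma)\le K\md(f\Gamma)$ follows from the symmetric construction, the rewritten hypothesis being symmetric in $f$ and $f^{-1}$.

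The main obstacle is that we start from a mere topological embedding: there is no a priori Sobolev regularity, no Lusin condition $(N)$ or $(N^{-1})$, and --- because the $A$'s are uncentered --- not even a pointwise linear-dilatation bound $H_f(x)<\infty$ to fall back on, so this hypothesis is genuinely weaker than Gehring's or Heinonen--Koskela's pointwise conditions (a single point may lie near the boundary of every controlled set). The covering argument must therefore be arranged so that it never needs to control the image of a null set and so that it simultaneously forces the absolute continuity of $f$ on almost every line, i.e.\ $f\in W^{1,n}_{\loc}(\Omega)$, together with condition $(N)$; this bootstrapping is where the real work concentrates. I expect the cleanest route is to mimic the architecture of Gehring's proof of the metric definition --- legitimate because, by the monotonicity rewrite above, our hypothesis has the same ``every scale'' strength as Gehring's $H_f\le H$ --- systematically replacing centered balls by bounded-eccentricity sets and centered Vitali/Besicovitch coverings by the bounded-eccentricity version. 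A tempting shortcut, deducing first the centered liminf condition $\liminf_{r\to0}L_f(x,r)/l_f(x,r)\le H'$ and then quoting the Heinonen--Koskela theorem, looks unavailable: recovering information about balls \emph{centered} at $x$ from sets that merely contain $x$ runs into exactly the uncentered-set difficulty.
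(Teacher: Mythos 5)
Your overall architecture — show a modulus inequality with constant depending only on $n$ and $H$, building an admissible weight from the controlled sets $A$ and running a covering/disjointness argument — is the right one, and matches the paper's strategy of reducing to spherical rings (cf.\ Lemma~\ref{lemma:quasiconformal_criterion}) and then estimating $\md_n\Gamma(A)$ by a weight supported on an egg-yolk cover. But there is a genuine gap exactly at the step you treat as routine: \emph{``the $L^n$ estimate is checked on a disjointed Vitali subfamily of the $A$'s, using injectivity of $f$ (so the images stay disjoint).''} A disjointed Vitali subfamily of the $A$'s (or of their yolks) does \emph{not} cover; to recover a cover you must dilate, and dilation in the domain is \emph{not} controlled in the range by a mere topological embedding — $f(\lambda A)$ has no relation to $\lambda f(A)$. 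Equivalently, if you run a $5B$-type argument on the inner balls $B_x \subset A_x$, you get a disjointed family $\{B_i\}$ in the domain and a cover $\bigcup A_i$, but the inner balls $\{B_i'\}$ of the \emph{images} $A_i'=f(A_i)$ need not be disjoint, and it is precisely the disjointness of $\{B_i'\}$ that makes $\sum_i \diam(A_i')^n \lesssim \sum_i m_n(B_i')\lesssim m_n(f(\Omega))$ converge. Injectivity of $f$ disjoints $\{f(B_i)\}$, not $\{B_i'\}$, and $f(B_i)$ carries no quantitative relation to $A_i'$.

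This is exactly the obstruction the paper identifies: neither the $5B$-covering lemma (no control of $f(5B)$ vs.\ $f(B)$) nor the Besicovitch covering theorem (centered geometric balls only) survives the passage from domain to range under an arbitrary homeomorphism. The paper's fix is a new covering lemma — the egg-yolk covering lemma (Lemma~\ref{lemma:eggyolk}) — which, given egg-yolk pairs $\{(A_i,B_i)\}$ and $\{(A_i',B_i')\}$ with $f(A_i)=A_i'$, produces a subcover $\{D_j\}$ with $\bigcup D_j=\bigcup A_i$ (no dilation) whose domain yolks $\{B_j\}$ \emph{and} range yolks $\{B_j'\}$ are \emph{simultaneously} pairwise disjoint. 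That simultaneous disjointness is what makes both the Bojarski step in the domain and the diameter-to-measure step $\diam(A_j')^n\lesssim m_n(B_j')$ in the range work at once. Your plan correctly senses that some ``bounded-eccentricity version of Besicovitch'' is needed, but it does not account for the fact that what is really required is a covering lemma that propagates disjointness through a homeomorphism; this is the main technical innovation you would have to supply, and without it the $L^n$ estimate does not close.
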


Equivalently, $f$ is quasiconformal if for each $x\in \Omega$ there exists a sequence of open sets $A_k$, $k\in \N$, containing $x$ and shrinking to $x$ such that $A_k$ and $f(A_k)$ have uniformly bounded eccentricity, not depending on $k$ or $x$. One advantage of this condition, compared to the classical metric definition, is that it is completely symmetric with respect to $f$ and $f^{-1}$: 
\begin{align*}
E_f(x) = E_{f^{-1}}(f(x)).
\end{align*}
Another advantage is that the sets $A_k$ shrinking to $x$ are \textit{uncentered}, as opposed to the balls in the metric definition. This feature makes Theorem \ref{theorem:eccentric} very powerful, as illustrated by a compelling application in the problem of rigidity of circle domains that we discuss below. 

The proof of the theorem of Heinonen--Koskela, replacing ``limsup" with ``liminf" in \eqref{intro:definition_metric}, cannot be used for the proof of Theorem \ref{theorem:eccentric}. The reason is that it relies crucially on the Besicovitch covering theorem (see Theorem \ref{theorem:besicovitch}), which roughly asserts that a cover by open balls can be replaced by a subcover that has bounded multiplicity. This powerful tool can be used only for coverings by geometric balls and not by arbitrary sets of bounded eccentricity. Thus, for the proof of Theorem \ref{theorem:eccentric} we need a new technical covering lemma, which is one of the innovations of the current work and we term \textit{the egg-yolk covering lemma}.  We present this lemma in Section \ref{section:egg}. Theorem \ref{theorem:eccentric} is a special case of the more general Theorem \ref{theorem:main}, in which we allow for some exceptional sets as well, instead of requiring \eqref{theorem:eccentric:condition} at all points. 

\subsection{Exceptional sets for the definition of quasiconformality}

By a result of Gehring \cite{Gehring:Rings}*{Theorem 8}, in order to establish quasiconformality one does not need to verify condition \eqref{intro:definition_metric} at all points $x\in \Omega$, but can allow for some exceptional sets: a set of $\sigma$-finite Hausdorff $(n-1)$-measure, where we could have $H_f=\infty$, and a set of $n$-measure zero, where $H_f$ could be finite but unbounded.  On the other hand, the result and methods of Heinonen--Koskela \cite{HeinonenKoskela:liminf} do not allow for an exceptional set, if one replaces ``limsup" with ``liminf" in \eqref{intro:definition_metric}. 

Later Kallunki--Koskela \cites{KallunkiKoskela:quasiconformal,KallunkiKoskela:exceptional2} proved a significant generalization of the theorems of Gehring and Heinonen--Koskela, replacing ``limsup" with ``liminf" in \eqref{intro:definition_metric} and allowing for the same type of exceptional sets as Gehring's theorem. The possibility of an exceptional set in the Heinonen--Koskela theorem was immediately exploited for resolving rigidity problems in complex dynamics  \cites{GraczykSmirnov:rigidity,Haissinsky:rigidity,KozlovskiShenVStiren:rigidity,Smania:rigidity}.

There has been a long line of research in obtaining such  results for Sobolev functions in Euclidean space and for Sobolev and quasiconformal maps in metric spaces; see  \cites{HeinonenKoskela:qc, BaloghKoskela:Loewner,KoskelaShanmugalingamTyson:removable,KallunkiMartio:acl,KoskelaRogovin:acl ,BaloghKoskelaRogovin:qc,Williams:dilatation}.

Our main result, Theorem \ref{theorem:main}, is a further generalization of above results and allows for a much larger class of exceptional sets than sets of $\sigma$-finite Hausdorff $(n-1)$-measure. Namely, sets that are ``negligible for extremal distances" in some weak sense are exceptional for the definition of quasiconformality. We introduce some terminology before stating the result.

For an open set $U\subset \R^n$ and two continua  {$F_1,F_2\subset  U$} the family of curves joining $F_1$ and $F_2$ inside $U$ is denoted by $\Gamma(F_1,F_2;U)$. For a set $A\subset \R^n$ we denote by $\mathcal F_0(A)$ the family of curves in $\R^n$ that do not intersect $A$, {except possibly at the endpoints}, and by $\mathcal F_{\sigma}(A)$ the family of curves in $\R^n$ that intersect $A$  at countably many points, not counting multiplicity. 

A set $A\subset \R^n$ is \textit{negligible for extremal distances} if for every pair of non-empty, disjoint continua $F_1,F_2\subset \R^n$ we have
\begin{align*}
\md_n \Gamma(F_1,F_2;\R^n) =\md_n (\Gamma(F_1,F_2;\R^n)\cap \mathcal F_0(A)).
\end{align*}
In this case, we write $A\in \NED$. We remark that we do not require $A$ to be closed. Closed $\NED$ sets in the plane were studied and characterized in the seminal work of Ahlfors--Beurling \cite{AhlforsBeurling:Nullsets}. Specifically, a closed set $A$ is $\NED$ if and only if every conformal embedding $f\colon \C\setminus A\to \C$ is the restriction of a linear map. The role of $\NED$ sets in higher dimensions and their connection to removable sets for Sobolev functions were studied in \cites{Vaisala:null,AseevSycev:removable,VodopjanovGoldstein:removable}.

We introduce in this paper a significantly larger class of sets and show that they are exceptional for the definition of quasiconformality. We say that a set $A\subset \R^n$ is \textit{countably negligible for extremal distances} if 
\begin{align*}
\md_n \Gamma(F_1,F_2;\R^n) =\md_n (\Gamma(F_1,F_2;\R^n)\cap \mathcal F_{\sigma}(A))
\end{align*}
for every pair of non-empty, disjoint continua $F_1,F_2\subset \R^n$. In this case we write $A\in \CNED$.  Again, the set $A$ need not be closed. The monotonicity of modulus implies that $\NED\subset \CNED$.  We now state our main theorem.

\begin{theorem}\label{theorem:main}
Let $\Omega\subset \R^n$ be an open set and $f\colon \Omega \to \R^n$ be a topological embedding. Let $A,G\subset \Omega$ be sets such that 
\begin{align*}
A\in \CNED \,\,\, \textrm{and}\,\,\, \textrm{either} \,\,\, m_n(G)=0 \,\,\, \textrm{or}\,\,\, m_n(f(G))=0.
\end{align*}
Suppose that there exists a constant $H\geq 1$ such that for all $x\in \Omega\setminus (A\cup G)$ we have
\begin{align*}
 E_f(x) \leq H,
\end{align*}
and for all $x\in G$ we have
\begin{align*}
 E_f(x)<\infty.
\end{align*}
Then $f$ is $K$-quasiconformal in $\Omega$, for some $K\geq 1$ depending only on $n$ and $H$.
\end{theorem}

Here $m_n$ denotes the $n$-dimensional Lebesgue measure. If $A=G=\emptyset$, then we obtain Theorem \ref{theorem:eccentric}. The proof of Theorem \ref{theorem:main} is presented in Section \ref{section:proof}. The central technical device for the proof is Theorem \ref{theorem:upper_gradient}.

We remark that Theorem \ref{theorem:main} is innovative in three different directions, compared to previous results of Gehring, Heinonen--Koskela, and Kallunki--Koskela. First, we assume upper bounds for the eccentric distortion $E_f$ rather than the metric distortion $H_f$; recall that $E_f\leq H_f$. Second, our proof gives a new perspective and allows the possibility that either $m_n(G)=0$ or $m_n(f(G))=0$, while in previous works only the first assumption was considered. Third, the set $A$ is assumed to be $\CNED$, while in the past only sets of $\sigma$-finite Hausdorff $(n-1)$-measure were considered. In \cite{Ntalampekos:cned} the current author shows that the class of $\CNED$ sets includes sets of $\sigma$-finite Hausdorff $(n-1)$-measure, as well as, many other known classes of \textit{quasiconformally removable} sets. A closed set $A\subset \R^n$ is quasiconformally removable if every homeomorphism of $\R^n$ that is quasiconformal in $\R^n\setminus A$ is quasiconformal in $\R^n$.  Thus, we have the following consequence of Theorem \ref{theorem:main}.
\begin{corollary}
Closed $\CNED$ sets are quasiconformally removable. 
\end{corollary}
It is an open problem to characterize such sets even in dimension $2$. Known classes of removable sets include sets of $\sigma$-finite Hausdorff $(n-1)$-measure \cites{Besicovitch:Removable, Gehring:Rings}, sets with good geometry, such as boundaries of John and H\"older domains \cites{Jones:removability,JonesSmirnov:removability}, and $\NED$ sets \cite{AhlforsBeurling:Nullsets}. In the subsequent work \cite{Ntalampekos:cned} the current author shows that the above-mentioned classes of sets are also in the $\CNED$ class, suggesting that closed $\CNED$ sets characterize quasiconformally removable 
sets. 

Theorem \ref{theorem:main} has already found an application in the deep problem of rigidity of circle domains. A circle domain in the plane is \textit{conformally rigid} if every conformal map onto another circle domain is the restriction of a M\"obius transformation. It is conjectured by He--Schramm \cite{HeSchramm:Rigidity} that a circle domain is rigid if and only if its boundary is quasiconformally removable. The conjecture has been established in some cases by He--Schramm and by the author in joint work with Younsi \cite{NtalampekosYounsi:rigidity}. With the aid of Theorem \ref{theorem:main} the current author \cite{Ntalampekos:rigidity_cned} is able to establish that circle domains with $\CNED$ boundary are rigid,  a result that features not only $\CNED$ sets, but also the use of the eccentric distortion in the definition of quasiconformality. This development is the strongest so far and provides substantial evidence for the conjecture of He--Schramm; for example, if one can show that $\CNED$ sets coincide with quasiconformally removable sets, then the conjecture is true for domains with totally disconnected boundary as a consequence of \cite{Ntalampekos:rigidity_cned}. 

We expect that our results will find further applications in rigidity problems in complex dynamics, where often one has no geometric information about the distortion of balls, but can control the distortion of non-round dynamical objects, such as \textit{puzzle pieces}.

\subsection*{Acknowledgments} The author would like to thank the referee for carefully reading the paper and providing valuable comments and suggestions.

\section{The egg-yolk covering lemma}\label{section:egg}

For quantities $A$ and $B$ we write $A\lesssim B$ if there exists a constant $c>0$ such that $A\leq cB$. If the constant $c$ depends on another quantity $H$ that we wish to emphasize, then we write instead $A\leq c(H)B$ or $A\lesssim_H B$. Moreover, we use the notation $A\simeq B$ if $A\lesssim B$ and $B\lesssim A$. As previously, we write $A\simeq_H B$ to emphasize the dependence of the implicit constants on the quantity $H$. All constants in the statements are assumed to be positive even if this is not stated explicitly and the same letter may be used in different statements to denote a different constant.  

\subsection{Known covering results}
We first state a classical covering result.

\begin{lemma}[$5B$-covering lemma, \cite{Heinonen:metric}*{Theorem 1.2, p.~2}]\label{lemma:5b}
Let $X$ be a metric space and $\mathcal B$ be a collection of open balls in $X$ with uniformly bounded radii. Then there exists a disjointed subcollection $\mathcal B'$ of $\mathcal B$ such that
$$ \bigcup_{B\in \mathcal B} B \subset \bigcup_{B\in \mathcal B'}5B.$$
\end{lemma}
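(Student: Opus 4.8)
The plan is to prove the $5B$-covering lemma via a transfinite / Zorn's lemma argument that selects a maximal disjointed subfamily, after first reducing to the case where the radii are genuinely bounded by a fixed constant.

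\textbf{Step 1: Dyadic decomposition by radius.} Let $R>0$ be an upper bound for the radii of balls in $\mathcal B$. For $j \in \Z_{\geq 0}$ let $\mathcal B_j$ be the subcollection of those $B \in \mathcal B$ whose radius lies in $(2^{-j-1}R, 2^{-j}R]$. Then $\mathcal B = \bigcup_{j\geq 0}\mathcal B_j$. I would build the desired subfamily greedily across the scales $j=0,1,2,\dots$: having chosen a disjointed family inside $\mathcal B_0 \cup \dots \cup \mathcal B_{j-1}$, I pass to $\mathcal B_j$ and, using Zorn's lemma (or a transfinite recursion), enlarge it to a family $\mathcal B^{(j)}$ that is still disjointed and is maximal with respect to the property that every newly added ball from $\mathcal B_j$ is disjoint from all previously selected balls. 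Set $\mathcal B' = \bigcup_j \mathcal B^{(j)}$; this is disjointed by construction.

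\textbf{Step 2: The covering claim.} Fix any $B \in \mathcal B$, say $B \in \mathcal B_j$ with radius $r \in (2^{-j-1}R, 2^{-j}R]$. By maximality at stage $j$, either $B \in \mathcal B'$ already (and then $B \subset 5B$ trivially), or $B$ meets some ball $B^* \in \mathcal B'$ that was selected at a stage $\leq j$; in particular the radius $r^*$ of $B^*$ satisfies $r^* > 2^{-j-1}R \geq r/2$, i.e. $r \leq 2 r^*$. If $x^*$ is the center of $B^*$ and $x$ the center of $B$, then picking a point $z \in B \cap B^*$ and using the triangle inequality gives, for any $y \in B$,
\begin{align*}
|y - x^*| \leq |y - x| + |x - z| + |z - x^*| < r + r + r^* \leq 2r^* + 2r^* + r^* = 5r^*,
\end{align*}
so $B \subset 5B^* \subset \bigcup_{B'\in \mathcal B'} 5B'$. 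This establishes the inclusion $\bigcup_{B\in\mathcal B}B \subset \bigcup_{B\in\mathcal B'}5B$.

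\textbf{Main obstacle.} The only genuinely non-elementary point is justifying the existence of the maximal disjointed subfamily at each scale without a countability or separability hypothesis on $X$: this requires Zorn's lemma applied to the poset of disjointed subfamilies of $\mathcal B_j$ that are compatible with the earlier choices, ordered by inclusion, where the union of a chain is again such a family. Everything else — the dyadic bookkeeping in the radii and the triangle-inequality estimate — is routine. (If one is willing to assume $\mathcal B$ is countable, Step 1's recursion can be replaced by an ordinary greedy enumeration and Zorn's lemma is avoided entirely.)
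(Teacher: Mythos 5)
Your proof is correct and is the standard argument — dyadic decomposition by scale, a maximal disjointed subfamily at each scale obtained via Zorn's lemma, and the triangle-inequality estimate giving $B \subset 5B^*$ when $B$ meets a larger-or-comparable selected ball. The paper does not actually prove Lemma \ref{lemma:5b} but refers to \cite[Theorem 1.2, p.~2]{Heinonen:metric}, and your argument is essentially the proof given there.
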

For an open ball $B=B(x_0,r_0)$ and $\lambda>0$ we denote by $\lambda B$ the ball $B(x_0,\lambda r_0)$. Note that in metric spaces the center and radius of a ball need not be unique, so we regard the ball $B(x_0,r_0)$ not only as a set, but also as a pair $(x_0,r_0)$. Then there is no ambiguity in the definition of $\lambda B$.  

The power of the $5B$-covering lemma lies on the fact that it allows us to replace arbitrary covers by balls with covers by essentially disjoint balls. One drawback of the $5B$-covering lemma, however, is that if $f$ is an arbitrary homeomorphism on $X$, then there is no relation between the sizes of $f(B)$ and $f(5B)$. In particular, rescaling the family $\{f(5B)\}_{B\in \mathcal B'}$ by a uniform fixed factor will not give a disjointed family in general; more specifically, one cannot find a scaling factor $\lambda \in (0,1)$ and points $x_B\in f(5B)$ so that the family $\{B(x_B, \lambda \diam(f(5B)))\}_{B\in \mathcal B'}$ is disjointed. For this reason, when working with homeomorphisms of \textit{Euclidean} space, one can instead use the Besicovitch covering theorem.
\begin{theorem}[Besicovitch covering theorem, \cite{Mattila:geometry}*{Theorem 2.7}]\label{theorem:besicovitch}
Let $A\subset \R^n$ be a bounded set and $\mathcal B$ be a family of closed balls such that each point of $A$ is the center of a ball in $\mathcal B$. Then there exists a subcollection $\mathcal B'$ of $\mathcal B$ such that 
\begin{align*}
A\subset \bigcup_{B\in \mathcal B'} B\quad \textrm{and} \quad \sum_{B\in \mathcal B'} \x_{B} \leq c(n).
\end{align*}
\end{theorem}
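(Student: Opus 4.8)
The plan is to carry out the classical greedy construction. First I would dispose of the trivial case: if $\sup\{r:\br B(a,r)\in\mathcal B\text{ for some }a\in A\}=\infty$, then $\mathcal B$ contains a ball of radius larger than $\diam A$ centred at a point of $A$, which already covers $A$, so assume this supremum is finite. I then select balls recursively. Having chosen $B_1,\dots,B_{j-1}$, put $A_j=A\setminus\bigcup_{i<j}B_i$; if $A_j=\emptyset$ stop, and otherwise pick $B_j=\br B(a_j,r_j)\in\mathcal B$ with $a_j\in A_j$ and $r_j\geq\tfrac34\sup\{r:\br B(a,r)\in\mathcal B\text{ for some }a\in A_j\}$. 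Two properties are built in and will be used throughout: for $i<j$, (a) $a_j\notin B_i$, so $|a_i-a_j|>r_i$, and (b) the radii are almost decreasing, $r_j\leq\tfrac43 r_i$ (since $\br B(a_j,r_j)$ was a competitor at stage $i$ as well).

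Next I would check that $\mathcal B'=\{B_j\}$ covers $A$. From (a) and (b) the concentric balls $\tfrac13 B_j$ are pairwise disjoint; as they all lie in a fixed bounded neighbourhood of $A$, there are only countably many of them and $r_j\to 0$. Hence no point $a\in A$ is omitted: such a point lies in every $A_j$, so a ball of $\mathcal B$ centred at $a$ competes at every stage, which would keep $r_j$ bounded away from $0$.

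The core is the overlap bound. Fix $x\in\R^n$ and let $I=\{j:x\in B_j\}$; I would show $\#I\leq c(n)$. Call $j\in I$ \emph{near} if $|x-a_j|<\tfrac13 r_j$ and \emph{far} otherwise. For near $i<j$, (a) gives $|a_i-a_j|>r_i$ while the triangle inequality and (b) give $|a_i-a_j|\leq|x-a_i|+|x-a_j|<\tfrac13 r_i+\tfrac13 r_j\leq\tfrac79 r_i$, a contradiction; so at most one index is near. For the far indices I would argue in two steps. If far $i<j$ satisfy $a_i\notin B_j$, then $|a_i-a_j|>\max\{r_i,r_j\}\geq\max\{|x-a_i|,|x-a_j|\}$, so in the triangle $x,a_i,a_j$ the side $a_ia_j$ is strictly longest and the angle at $x$ is the strictly largest angle, hence exceeds $60^{\circ}$; thus the directions $(a_j-x)/|a_j-x|$ over such pairs are more than $60^{\circ}$ apart. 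If instead $a_i\in B_j$, then $r_i<|a_i-a_j|\leq r_j\leq\tfrac43 r_i$, and a volume count bounds such configurations: for fixed $j$ the qualifying $a_i$ are $\tfrac34 r_j$-separated inside the shell $B_j\setminus\tfrac34 B_j$, and for fixed $i$ the qualifying $a_j$ are $r_i$-separated inside $B(a_i,\tfrac43 r_i)$, so the relation ``$a_i\in B_j$'' defines a graph on the far indices of dimensionally bounded degree. Covering $S^{n-1}$ by a dimensional number of spherical caps of radius $30^{\circ}$ and applying pigeonhole, most far indices fall in one cap, are therefore pairwise within $60^{\circ}$, hence pairwise related by ``$a_i\in B_j$'', hence form a clique in the bounded-degree graph, so they number at most a dimensional constant. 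Together with the near bound this gives $\#I\leq c(n)$, which is the assertion.

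I expect the last paragraph to be the main obstacle, and within it the configurations where $x$ lies near the boundary of some $B_j$: these are exactly the ones for which a naive radial-projection argument from $x$ fails, which is why the auxiliary relation ``$a_i\in B_j$'' has to be isolated and estimated separately. The remaining steps are routine.
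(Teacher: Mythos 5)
The paper does not supply a proof of this theorem at all; it simply cites Ziemer's and Mattila's textbooks. So there is nothing in the paper to compare against line by line. Your argument is a correct, self-contained rendition of the classical Besicovitch proof, and in fact follows essentially the same structure as the textbook proofs: greedy selection with the $\tfrac34$-factor giving the ``almost decreasing'' radii, disjointness of the $\tfrac13 B_j$ to show the selection exhausts $A$, and then the overlap estimate at a fixed $x$ split into a single ``near'' index plus a dimensionally bounded number of ``far'' indices.

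Two small points worth flagging for completeness, neither of which is a gap. First, the argument implicitly assumes all radii are positive (so that the volume of $\tfrac13 B_j$ forces $r_j\to 0$ and so that the direction $(a_j-x)/|a_j-x|$ is defined for far indices); this is the standard convention for ``balls'' and the degenerate case is trivial, but it is worth stating. Second, your handling of the far pairs where the angle estimate alone fails --- isolating the relation ``$a_i\in B_j$'' and observing that it has bounded degree in both directions, so that the indices landing in a common spherical cap form a clique of bounded size --- is exactly the kind of care the exceptional configurations require, and is done correctly: for $i<j$ with $a_i\in B_j$ you get $r_i<|a_i-a_j|\le r_j\le\tfrac43 r_i$, and the resulting $\tfrac34 r_j$-separation (for fixed $j$) and $r_i$-separation in a $\tfrac43 r_i$-ball (for fixed $i$) both give dimensional constants. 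Combined with the pigeonhole over $30^\circ$-caps, this yields $\#I\le c(n)$. The proof is sound.
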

Obviously, if $f$ is a homeomorphism of $\R^n$, then the family $\{f(B)\}_{B\in \mathcal B'}$ covers the set $f(A)$ with uniformly bounded multiplicity. Hence, unlike the $5B$-covering lemma, here we obtain information for both $\{B\}_{B\in \mathcal B'}$ and $\{f(B)\}_{B\in \mathcal B'}$. The drawback of this theorem is that it only works with \textit{geometric} balls in \textit{Euclidean} space and there is no generalization for covers by sets of bounded eccentricity, as defined in the Introduction, or for balls in metric spaces.

The egg-yolk covering lemma that we prove in this section can be regarded as a generalization of the $5B$-covering lemma and the Besicovitch covering theorem, giving favorable covers that encode geometric information both in the domain and the range of a homeomorphism between metric spaces.

Before moving to the statement of the egg-yolk covering lemma, we state a well-known inequality that is often used in combination with covering lemmas.

\begin{lemma}[\cite{Bojarski:inequality}]\label{lemma:bojarski}
Let $p\geq 1$ and $\lambda >0$. Suppose that $\{B_i\}_{i\in \N}$ is a collection of balls in $\R^n$ and $a_i$, $i\in \N$, is a sequence of non-negative numbers. Then
\begin{align*}
\left \|  \sum_{i\in \N} a_i \x_{\lambda B_i} \right \|_{L^p(\R^n)} \leq c({n,p,\lambda})  \left \|  \sum_{i\in \N} a_i \x_{B_i} \right \|_{L^p(\R^n)}.
\end{align*} 
\end{lemma}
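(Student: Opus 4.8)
The plan is to combine $L^p$-duality with the Hardy--Littlewood maximal inequality. Write $g=\sum_{i\in\N}a_i\x_{B_i}$ and $g_\lambda=\sum_{i\in\N}a_i\x_{\lambda B_i}$; we may assume $\|g\|_{L^p(\R^n)}<\infty$, since otherwise there is nothing to prove. The case $p=1$ is immediate by scaling, because $m_n(\lambda B_i)=\lambda^n m_n(B_i)$ and hence $\|g_\lambda\|_{L^1}=\sum_{i\in\N}a_i m_n(\lambda B_i)=\lambda^n\|g\|_{L^1}$ by Tonelli's theorem. So from now on I would assume $1<p<\infty$, set $p'=p/(p-1)$, and let $M$ denote the centered Hardy--Littlewood maximal operator on $\R^n$.

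The first step is a pointwise estimate comparing $\lambda B_i$ with the maximal function. Writing $B_i=B(c_i,r_i)$ and $\mu=\lambda+1$, for $x\in B_i$ and $y\in\lambda B_i$ we have $|y-x|\leq|y-c_i|+|c_i-x|<\lambda r_i+r_i=\mu r_i$, so $\lambda B_i\subset B(x,\mu r_i)$. Therefore, for any non-negative measurable function $h$ on $\R^n$ and any $x\in B_i$,
\[
\int_{\lambda B_i}h\leq\int_{B(x,\mu r_i)}h=m_n\big(B(x,\mu r_i)\big)\fint_{B(x,\mu r_i)}h\leq\mu^n m_n(B_i)\,Mh(x).
\]
Integrating this over $x\in B_i$ and dividing by $m_n(B_i)>0$ gives $\int_{\lambda B_i}h\leq\mu^n\int_{B_i}Mh$.

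The second step is a duality argument. Since $g_\lambda\geq 0$ is measurable, $\|g_\lambda\|_{L^p}=\sup\int_{\R^n}g_\lambda\,h$, where the supremum runs over non-negative $h\in L^{p'}(\R^n)$ with $\|h\|_{L^{p'}}\leq1$. For such $h$, using Tonelli's theorem, the estimate of the first step, Tonelli again, and then H\"older's inequality,
\[
\int_{\R^n}g_\lambda\,h=\sum_{i\in\N}a_i\int_{\lambda B_i}h\leq\mu^n\sum_{i\in\N}a_i\int_{B_i}Mh=\mu^n\int_{\R^n}g\cdot Mh\leq\mu^n\|g\|_{L^p}\|Mh\|_{L^{p'}}.
\]
Because $p'>1$, the Hardy--Littlewood maximal inequality gives $\|Mh\|_{L^{p'}}\leq c(n,p)\|h\|_{L^{p'}}\leq c(n,p)$. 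Taking the supremum over all admissible $h$ yields $\|g_\lambda\|_{L^p}\leq(\lambda+1)^n c(n,p)\,\|g\|_{L^p}$, which is the claim with $c(n,p,\lambda)=(\lambda+1)^n c(n,p)$.

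I do not expect a genuine obstacle here: the argument is a standard application of the maximal inequality, and the only thing to be careful about is the measurability and integrability bookkeeping --- every interchange of a countable sum with an integral is justified by Tonelli's theorem since all terms are non-negative, and $Mh$ is lower semicontinuous, hence Borel. Note finally that the estimate holds uniformly in $\lambda>0$, so there is no need to single out the trivial subcase $\lambda\leq1$ (where $\lambda B_i\subset B_i$).
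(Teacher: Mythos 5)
Your proof is correct. Note, though, that the paper does not prove this lemma at all: it is stated with a citation to Bojarski, so there is no ``paper's own proof'' to compare against. Your argument is the standard one for this estimate, via $L^p$--$L^{p'}$ duality and the Hardy--Littlewood maximal inequality. The key pointwise observation --- that for $x\in B_i=B(c_i,r_i)$ one has $\lambda B_i\subset B(x,(\lambda+1)r_i)$, hence $\int_{\lambda B_i}h\leq(\lambda+1)^n m_n(B_i)\,Mh(x)$ --- is exactly what is needed, and integrating it over $B_i$ to obtain $\int_{\lambda B_i}h\leq(\lambda+1)^n\int_{B_i}Mh$ is clean. The duality computation, with Tonelli justifying both interchanges of sum and integral and with H\"older followed by the maximal inequality for $p'>1$, closes the argument. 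Your handling of $p=1$ separately is necessary (the maximal inequality fails at $L^1$) and your explicit scaling computation there is right. All of the bookkeeping points you flag --- nonnegativity for Tonelli, measurability of $Mh$, the extended-real duality when $\|g_\lambda\|_p$ might a priori be infinite --- are correctly identified and handled.
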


\subsection{Egg-yolk pairs}
Let $(X,d)$ be a connected metric space. For a ball $B=B(x_0,r_0)\subset X$, we define $r(B)=r_0$. We always have $\diam(B)\leq 2r(B)$ and if $X\setminus B\neq \emptyset$, since $X$ is connected, we have
$$r(B)\leq \diam(B)\leq 2r(B).$$
Let $A\subset X$ be a bounded open set and $M\geq 2$. Suppose that there exists an open ball $B=B(x_0,r_0)$ such that $B\subset 2B\subset A\subset MB$. Then we call $(A,B)$ an \textit{$M$-egg-yolk pair}; see Figure \ref{figure:eggyolk}. If $(A,B)$ is an $M$-egg-yolk pair, we have the following immediate properties.
\begin{enumerate}[\upshape(EY1)]
\item $\diam(A)\leq 2Mr(B)$.
\item\label{egg:comparable} If $X\setminus A\neq \emptyset$, then $$\diam(B)\leq 2r(B)\leq \diam(2B)\leq \diam(A)\leq 2Mr(B)\leq 2M\diam(B).$$
\item\label{egg:distance} If $X\setminus A\neq \emptyset$, then $\dist(B,X\setminus A)\geq r(B)$.
\item\label{egg:ball_closure} If $x\in B$ and $y\in \br A$, then $d(x,y) \leq(M+1)r(B)$. 
\end{enumerate}
Moreover, the following statements are true.
\begin{enumerate}[\upshape(EY1)]\setcounter{enumi}{4}
\item\label{egg:intersect_yolk} Let $(A_i,B_i)$ be $M$-egg-yolk pairs, for $i=1,2$, such that $B_1\cap B_2\neq \emptyset$ and $\br{A_2}\not\subset A_1$. Then
\begin{align*}
 {\diam(A_2)} \geq c(M) \diam(A_1).
\end{align*} 
\end{enumerate}
\begin{proof}[Proof of \ref{egg:intersect_yolk}]
If $A_2=X$ there is nothing to prove, so we assume that $X\setminus A_2\neq \emptyset$. Also, since $\br{A_2}\not\subset A_1$, we cannot have $A_1=X$; thus $X\setminus A_1\neq \emptyset$. If $(M+1)r(B_2)<\dist(B_1, X\setminus A_1)$, then by \ref{egg:ball_closure}  for $x\in B_1\cap B_2$ and $y\in \br {A_2}$ we have $d(x,y)\leq (M+1)r(B_2)<\dist(B_1,X\setminus A_1)$. Thus, by the triangle inequality,
$$\dist(y,X\setminus A_1) \geq \dist(B_1,X\setminus A_1)-d(x,y)>0.$$
It follows that $\br{A_2}\subset A_1$, a contradiction. Therefore, by \ref{egg:distance} and \ref{egg:comparable},
$$(M+1)r(B_2)\geq \dist(B_1, X\setminus A_1)\geq r(B_1) \geq 2^{-1}M^{-1}\diam(A_1)$$
and, by \ref{egg:comparable} again,
\[\diam(A_2) \geq 2r(B_2)\geq \frac{1}{M(M+1)}\diam(A_1). \qedhere\]
\end{proof}
\begin{enumerate}[\upshape(EY1)]\setcounter{enumi}{5}
\item\label{egg:cluster} Let $(A_i,B_i)$, $i\in I$, be a family of $M$-egg-yolk pairs and suppose that there exists $i_0\in I$ such that $A_i\cap A_{i_0}\neq \emptyset$ and $\diam(A_i)\leq a \diam(A_{i_0})$ for each $i\in I$ and for some $a>0$. We set $A_I=\bigcup_{i\in I}A_i$. Then $(A_I,B_{i_0})$ is a $c(a,M)$-egg-yolk pair. 
\end{enumerate}
\begin{proof}[Proof of \ref{egg:cluster}]
Note that $B_{i_0}\subset 2B_{i_0} \subset A_{i_0}\subset A_I$ and $A_i\subset (2a+1)M B_{i_0}$ for each $i\in I$. Thus, $A_I\subset (2a+1)MB_{i_0}$.
\end{proof}

\begin{figure}
\centering
	\begin{overpic}[width=.4\linewidth]{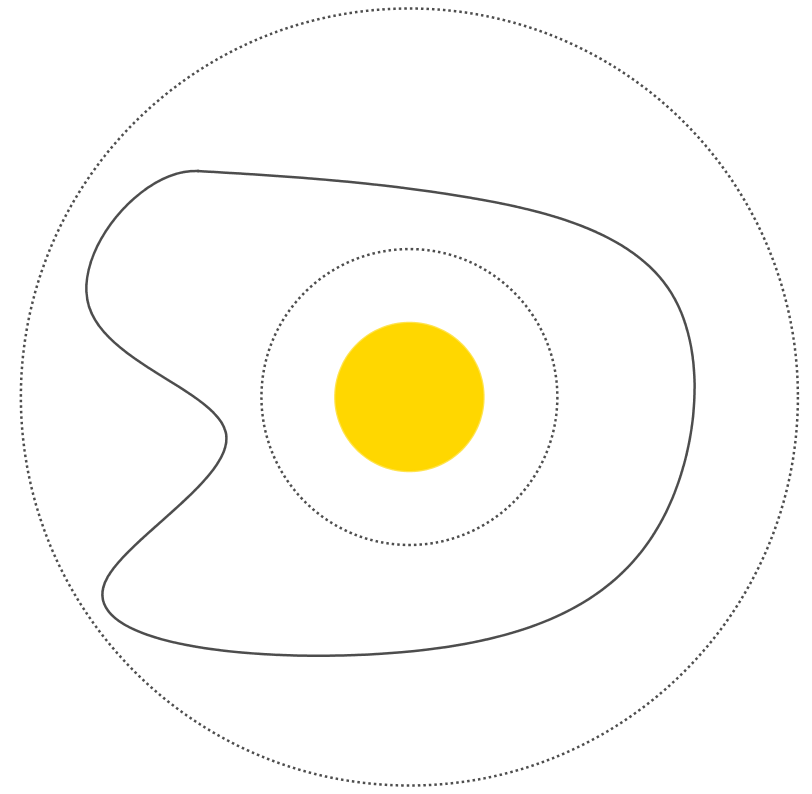}
		\put (50,50) {$B$}
		\put (66,60) {$2B$}
		\put (75,70) {$A$}
		\put (88,82) {$MB$}		
	\end{overpic}
	\caption{An $M$-egg-yolk pair.}\label{figure:eggyolk}
\end{figure}

\subsection{The egg-yolk covering lemma}

\begin{lemma}[Egg-yolk covering lemma]\label{lemma:eggyolk}
Let $X,Y$ be compact, connected metric spaces,  $f\colon X\to Y$ be a homeomorphism, and $M\geq 2$. Let $\{(A_i,B_i)\}_{i\in I}$ and $\{(A_i',B_i')\}_{i\in I}$ be families of $M$-egg-yolk pairs in $X$ and $Y$, respectively, with $f(A_i)=A_i'$ for each $i\in I$. Then there exists a set $J\subset I$ and families $\{(D_j,B_j)\}_{j\in J}$ and $\{(D_j',B_j')\}_{j\in J}$ of $c(M)$-egg-yolk pairs in $X$ and $Y$, respectively, such that 
\begin{enumerate}[\upshape(i)]\smallskip
	\item\label{yolk:union} $\bigcup_{j\in J} D_j =\bigcup_{i\in I} A_i$,\smallskip
	\item\label{yolk:image} $f(D_j)=D_j'$ for each $j\in J$, and \smallskip
	\item\label{yolk:disjoint} the balls $B_j$, $j\in J$, are pairwise disjoint and the balls $B_j'$, $j\in J$, are pairwise disjoint.
\end{enumerate}
\end{lemma}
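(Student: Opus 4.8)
The plan is to run a Vitali-type greedy selection, processing the egg-yolk pairs in order of decreasing size; the twist is that ``size'' must be measured so as to control the domain and the range simultaneously. Since $X$ and $Y$ are compact, for each $i\in I$ set
\begin{align*}
\rho(i)=\max\left\{\frac{\diam(A_i)}{\diam(X)},\ \frac{\diam(A_i')}{\diam(Y)}\right\}\in(0,1],
\end{align*}
and partition $I=\bigsqcup_{k\geq 0}\mathcal I_k$ according to the dyadic range $2^{-k-1}<\rho(i)\leq 2^{-k}$. Build $J=\bigsqcup_{k\geq 0}J_k$ by induction on $k$: having processed stages $<k$, consider the pairs in $\mathcal I_k$ that have not yet been \emph{assigned}, and by Zorn's lemma (the relevant property being stable under unions of chains) extract a maximal subfamily $J_k$ such that the balls $\{B_j\}_{j\in J_{\leq k}}$ are pairwise disjoint and the balls $\{B_j'\}_{j\in J_{\leq k}}$ are pairwise disjoint, where $J_{\leq k}=\bigcup_{l\leq k}J_l$. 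By maximality, every remaining $i\in\mathcal I_k\setminus J_k$ meets some selected pair: there is $j=j(i)\in J_{\leq k}$, selected at a stage $l\leq k$, with $B_i\cap B_j\neq\emptyset$ or $B_i'\cap B_j'\neq\emptyset$, and we declare $i$ assigned to $j$. Finally, for $j\in J$ put $D_j=A_j\cup\bigcup\{A_i:i\text{ assigned to }j\}$, so that automatically $2B_j\subset A_j\subset D_j$, and $D_j'=f(D_j)=A_j'\cup\bigcup\{A_i':i\text{ assigned to }j\}$.

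\textbf{Conclusions \ref{yolk:union}--\ref{yolk:disjoint}.} These are immediate from the construction: every index is in $J$ or assigned, so $\bigcup_{j\in J}D_j=\bigcup_{i\in I}A_i$; $f(D_j)=D_j'$ by definition; and the two disjointness statements hold by the selection rule. It remains to check that $(D_j,B_j)$ and $(D_j',B_j')$ are $c(M)$-egg-yolk pairs. Since a common point of $B_i,B_j$ lies in $A_i\cap A_j$, and a common point of $B_i',B_j'$ lies in $A_i'\cap A_j'$, every pair $i$ assigned to $j$ satisfies $A_i\cap A_j\neq\emptyset$ and $A_i'\cap A_j'\neq\emptyset$; hence, once we show the diameter comparisons $\diam(A_i)\leq c(M)\diam(A_j)$ and $\diam(A_i')\leq c(M)\diam(A_j')$ for all such $i$, property (EY\ref{egg:cluster}) applied once in $X$ (with $i_0=j$) and once in $Y$ packages $D_j$ and $D_j'=f(D_j)$ into $c(M)$-egg-yolk pairs with yolks $B_j$ and $B_j'$, and we are done.

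\textbf{The diameter comparisons, and the main obstacle.} Here one uses (EY\ref{egg:intersect_yolk}) together with the stage bound $\rho(i)\leq 2\rho(j)$ (valid because $i\in\mathcal I_k$, $j\in J_l$, $l\leq k$). Suppose the collision is in $X$, i.e.\ $B_i\cap B_j\neq\emptyset$ (the case of a collision in $Y$ is symmetric under interchanging the roles of $X$ and $Y$). The elementary device that links the two spaces is: if $B_i\cap B_j\neq\emptyset$ and $\diam(A_i)<r(B_j)$, then $A_i$ lies in the ball of radius $r(B_j)+\diam(A_i)<2r(B_j)$ about the center of $B_j$, so $\bar A_i\subset 2B_j\subset A_j$, and applying the homeomorphism $f$ gives $\bar A_i'\subset A_j'$; thus a large gap between the $X$-diameters of $A_i$ and $A_j$ \emph{forces} $A_i'\subseteq A_j'$ in $Y$ as well. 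Combining this with (EY\ref{egg:intersect_yolk}) — for colliding yolks one has $\bar A_i\subset A_j$, or $\bar A_j\subset A_i$, or $\diam(A_i)\simeq_M\diam(A_j)$ — and with $\rho(i)\leq 2\rho(j)$, one obtains in every case the required bounds $\diam(A_i)\leq c(M)\diam(A_j)$ and $\diam(A_i')\leq c(M)\diam(A_j')$. I expect the main obstacle to be precisely the borderline case in which $A_i$ and $A_j$ are comparable in $X$ while the ratio of their $Y$-diameters is large (this can genuinely happen for a homeomorphism that expands near $A_i$ and contracts near $A_j$): there is no collision information in $Y$ to exploit. This is exactly what the choice of $\rho(i)$ as the \emph{maximum} of the two normalized diameters is designed to handle — the pair that is large in $Y$ is thereby pushed into an earlier stage, hence gets selected or assigned first — and what remains is a within-stage tie-break (over each dyadic band, and the boundedly many neighboring bands that can interact) that, among colliding comparable candidates, favors the pair with the larger image diameter. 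Since $\rho\leq 1$ and, within each cluster formed this way, diameters vary by a factor bounded in terms of $M$, every constant produced depends only on $M$.
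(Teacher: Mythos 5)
Your strategy is a natural Vitali-type greedy scheme graded by a combined size parameter, but the diameter comparisons you need do not follow from the ingredients you cite. Concretely, suppose $i$ is assigned to $j$ because $B_i'\cap B_j'\neq\emptyset$ (a collision in $Y$). To invoke (EY\ref{egg:cluster}) in both spaces you need both $\diam(A_i)\leq c(M)\diam(A_j)$ and $\diam(A_i')\leq c(M)\diam(A_j')$. The collision plus (EY\ref{egg:intersect_yolk}) control only the $Y$-ratio, and the stage bound $\rho(i)<2\rho(j)$ controls only the \emph{maxima} of the normalized diameters, and these maxima may be attained by different coordinates for $i$ and $j$. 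Take $\diam(A_j')/\diam(Y)=2^{-2}$ and $\diam(A_j)/\diam(X)=2^{-100}$ (so $\rho(j)=2^{-2}$, stage $2$, attained in $Y$), and $\diam(A_i)/\diam(X)=\diam(A_i')/\diam(Y)=2^{-3}$ (so $\rho(i)=2^{-3}$, stage $3$). This is consistent with $\rho(i)<2\rho(j)$, with $B_i'\cap B_j'\neq\emptyset$, and with (EY\ref{egg:intersect_yolk}) in $Y$, yet $\diam(A_i)/\diam(A_j)=2^{97}$, so your $D_j$ contains $A_i$ but cannot sit inside $c(M)B_j$. Note this is not the ``comparable-in-$X$'' borderline case you planned to tie-break: the $X$-diameters of $i$ and $j$ are wildly incomparable, the two pairs sit in different stages, and the problematic ratio is in $X$, not in the image — so ``favor the larger image diameter'' does nothing here.

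The paper closes this gap with two ideas you are missing. First, the auxiliary Lemma \ref{lemma:eggyolk_auxiliary}, proved via maximal totally ordered subfamilies of $\{A_i\}$ under inclusion, replaces the given families by new egg-yolk pairs $\{(F_j,B_j)\}$ with the structural property that colliding yolks in $X$ force comparable $X$-diameters of the egg-whites and colliding yolks in $Y$ force comparable $Y$-diameters. Second, the main argument is split into two passes: in the first pass, stages are graded by $X$-diameter alone and clusters are formed by collisions of the $Y$-yolks, so the required $Y$-comparison comes from the structural reduction while the required $X$-comparison comes from the stage ordering; this yields one-sided disjointness (the $B_j'$ disjoint). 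A second application of the same one-sided statement to $f^{-1}$ supplies the full conclusion. Collapsing both passes into one via $\rho=\max$, as you do, is precisely what leaves the cross-space diameter ratio unconstrained; even granting the auxiliary reduction, the single-pass scheme still fails on the scenario above.
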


The remaining of the section is devoted to the proof of the egg-yolk covering lemma. The reader interested in the proof of the main theorem of the paper, Theorem \ref{theorem:main}, may skip the rest of Section \ref{section:egg} and proceed with Section \ref{section:proof}. 

It is crucial for the application of the lemma that we are not requiring $B_i$ to be related to $B_i'$; we are only assuming that $f(A_i)=A_i'$. In the case that $X=Y$, $f$ is the identity map, and $B_i'=B_i$, compare this lemma to the $5B$-covering lemma. 

The main idea of the proof is to create the sets $D_k$ with the aid of property \ref{egg:cluster}, by taking the union of sets $A_i, A_j$ whenever $B_i\cap B_j\neq \emptyset$. The essential difficulty is that the diameters of $A_i$ and $A_j$ might not be comparable. We first establish an auxiliary result, which allows us to reduce to the case that $A_i$ and $A_j$ have comparable diameters whenever $B_i\cap B_j\neq \emptyset$. 

\begin{lemma}\label{lemma:eggyolk_auxiliary}
Under the assumptions of Lemma \ref{lemma:eggyolk}, there exists a set $J\subset I$ and families  $\{(F_j, B_j)\}_{j\in J}$ and $\{(F_j', B_j')\}_{j\in J}$ of $c(M)$-egg-yolk pairs in $X$ and $Y$, respectively, such that 
\begin{enumerate}[\upshape(i)]\smallskip
	\item \label{egg:aux:i}$\bigcup_{j\in J} F_j=\bigcup_{i\in I}A_i$,\smallskip
	\item\label{egg:aux:ii} $f(F_j)=F_j'$ for each $j\in J$, and\smallskip
	\item\label{egg:aux:iii} if $B_i\cap B_j\neq \emptyset$ (resp.\ $B_i'\cap B_j'\neq \emptyset$) for some $i,j\in J$, then 
	\begin{align*}
	c(M)^{-1}\leq \frac{\diam(F_i)}{\diam(F_j)} \leq c(M) \quad \left(\textrm{resp.} \quad c(M)^{-1}\leq \frac{\diam(F_i')}{\diam(F_j')} \leq c(M)\right).
	\end{align*}
\end{enumerate}
\end{lemma}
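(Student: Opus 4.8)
\textbf{Proof proposal for Lemma \ref{lemma:eggyolk_auxiliary}.}

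The plan is to build the sets $F_j$ by a greedy selection that processes the egg-yolk pairs in decreasing order of diameter, so that the balls $B_i$ associated with unselected pairs are ``absorbed'' into a selected pair whose $A$-set is comparable in size. Concretely, I would first handle the trivial case where some $A_i = X$ separately (then one pair suffices). Otherwise, property (EY\ref{egg:comparable}) gives $\diam(A_i)\simeq r(B_i)$ and $\diam(A_i')\simeq r(B_i')$, so I may freely pass between diameters of the $A$-sets and radii of the yolk balls. Since $X$ is compact, the diameters $\diam(A_i)$ are bounded above, and I would organize $I$ into generations $I_k = \{i : 2^{-k-1}\diam(X) < \diam(A_i) \le 2^{-k}\diam(A_i)\}$... — more precisely, I would use a transfinite/greedy argument: well-order $I$ so that larger $\diam(A_i)$ come first (or exhaust generation by generation), and at each step select a pair $(A_i, B_i)$ into $J$ only if $B_i$ is disjoint from all previously selected $B_j$; otherwise discard it but record that it ``clusters onto'' the selected pair it first met.

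The key step is then the clustering. For each $j \in J$, let $I_j$ be the set of indices $i \in I$ (selected or not) such that $B_i \cap B_j \ne \emptyset$ and $\diam(A_i) \le \diam(A_j)$ — by the greedy order, every discarded pair lands in some such $I_j$, and every selected pair trivially lies in its own $I_j$. Crucially, if $i \in I_j$ then $B_i \cap B_j \ne \emptyset$ forces $A_i \cap A_j \ne \emptyset$ (since $B_i \subset A_i$, $B_j \subset A_j$), and $\diam(A_i) \le \diam(A_j)$, so property (EY\ref{egg:cluster}) with $a = 1$ and $i_0 = j$ applies. Setting $F_j = \bigcup_{i \in I_j} A_i$, I get that $(F_j, B_j)$ is a $c(M)$-egg-yolk pair. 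Since $f$ is a homeomorphism with $f(A_i) = A_i'$, we have $F_j' := f(F_j) = \bigcup_{i \in I_j} A_i'$, and I need this to be a $c(M)$-egg-yolk pair in $Y$ as well; this is where I must be careful, because the selection was driven by the geometry in $X$, not in $Y$. So I would additionally require, when discarding a pair $i$, that $B_i' \cap B_j' = \emptyset$ for all previously selected $j$ — wait, that is not automatic. The honest approach: I would run the greedy selection requiring \emph{both} $B_i$ disjoint from selected $B_j$ \emph{and} $B_i'$ disjoint from selected $B_j'$, discarding $i$ if either fails, and attach $i$ to the first selected $j$ for which $B_i \cap B_j \ne \emptyset$ \emph{or} $B_i' \cap B_j' \ne \emptyset$. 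Then for the clustering bound on $F_j$ I split $I_j$ into those $i$ meeting $B_j$ and those meeting $B_j'$; in both cases $A_i \cap A_j \ne \emptyset$ (the second because $B_i' \cap B_j' \ne \emptyset \Rightarrow A_i' \cap A_j' \ne \emptyset \Rightarrow A_i \cap A_j \ne \emptyset$ by injectivity of $f$), and $\diam(A_i) \le \diam(A_j)$ by the ordering, so (EY\ref{egg:cluster}) still gives $(F_j, B_j)$ is $c(M)$-egg-yolk. Symmetrically, ordering so that one controls $\diam(A_j')$ too is the subtle point — since $f$ need not be quasisymmetric, $\diam(A_i) \le \diam(A_j)$ does \emph{not} imply $\diam(A_i') \le \diam(A_j')$.

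To get around this genuine obstacle, I would order $I$ by $\max\{\diam(A_i), \diam(A_i')\}$ decreasing (or process in generations defined by this max). Then when $i$ is discarded and attached to an earlier selected $j$, we have $\max\{\diam(A_i),\diam(A_i')\} \le \max\{\diam(A_j),\diam(A_j')\}$. This does not immediately give $\diam(A_i) \le \diam(A_j)$, but it does give $\diam(A_i) \le \max\{\diam(A_j),\diam(A_j')\}$ and $\diam(A_i') \le \max\{\diam(A_j),\diam(A_j')\}$. Using (EY\ref{egg:comparable}) to convert to yolk radii, and the fact that $B_i \cap B_j \ne \emptyset$ (so the pairs are ``close''), I would run the clustering estimate directly from the definition of egg-yolk pair: $F_j = \bigcup_{i\in I_j} A_i \subset \bigcup_{i \in I_j} MB_i$, and each $B_i$ with $B_i \cap B_j \ne \emptyset$ and $r(B_i) \lesssim_M \max\{r(B_j), r(B_j')\}$ is contained in $C(M)\bigl(\diam(A_j)+\diam(A_j')\bigr)/r(B_j)$ — hmm, this still mixes the two spaces. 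The cleanest fix, and what I expect the paper does: apply the whole construction \emph{twice}, once using the $X$-geometry to get an intermediate family, then refine using the $Y$-geometry; or equivalently, note that property (EY\ref{egg:intersect_yolk}) already says that if $B_i \cap B_j \ne \emptyset$ and neither $\bar A_i \subset A_j$ nor $\bar A_j \subset A_i$, then $\diam(A_i) \simeq_M \diam(A_j)$ \emph{automatically}. So the only bad pairs are nested ones, and nested pairs can be collapsed (keep the larger). Thus I would: (1) discard any $i$ such that $\bar A_i \subset A_j$ for some other $j$ (keeping maximal ones); after this, by (EY\ref{egg:intersect_yolk}), $B_i \cap B_j \ne \emptyset$ already forces $\diam(A_i) \simeq_M \diam(A_j)$, and simultaneously, since $f$ is a homeomorphism, $\bar A_i \subset A_j \iff \bar A_i' \subset A_j'$, so the same nesting reduction works in $Y$ and forces $\diam(A_i') \simeq_M \diam(A_j')$ whenever $B_i' \cap B_j' \ne \emptyset$; (2) then run the greedy disjoint selection on the $B_j$'s and cluster via (EY\ref{egg:cluster}).

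The main obstacle, as indicated, is reconciling the $X$-side and $Y$-side size comparisons when $f$ is merely a homeomorphism; the resolution is that property (EY\ref{egg:intersect_yolk}) makes comparability of diameters \emph{automatic} for non-nested intersecting yolk pairs, and nesting is a topological condition preserved by $f$, so a single nesting-reduction step simultaneously tames both spaces. After that, verifying (i), (ii), (iii) is routine: (i) holds because every $A_i$ is contained in some $F_j$ (and each $F_j$ is a union of $A_i$'s); (ii) holds by $f(A_i) = A_i'$ and injectivity; (iii) holds because $B_i \cap B_j \ne \emptyset$ for $i,j \in J$ forces $\diam(A_i) \simeq_M \diam(A_j)$, hence $\diam(F_i) \simeq_M \diam(A_i) \simeq_M \diam(A_j) \simeq_M \diam(F_j)$ (the first and last comparabilities from $F_j$ being a $c(M)$-egg-yolk pair containing $A_j$ and built from sets meeting $A_j$ of no larger diameter), and symmetrically in $Y$.
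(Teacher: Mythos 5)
You arrive at the right key observation, which is the same one the paper uses: by (EY5), two egg-yolk pairs whose yolk balls intersect automatically have comparable diameters \emph{unless} one $A$-set is nested in the other, and nesting ($\overline{A_i}\subset A_j$) is a purely topological relation preserved by the homeomorphism $f$, so a single nesting reduction tames both the $X$-side and the $Y$-side simultaneously. The first part of your proposal (the greedy size-ordered selection with $\max\{\diam A_i,\diam A_i'\}$) is, as you yourself recognize, a dead end precisely because $f$ is not quasisymmetric, and your pivot to nesting reduction is the correct move.

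However, the execution of that pivot has a genuine gap. You propose to ``discard any $i$ such that $\overline{A_i}\subset A_j$ for some other $j$, keeping maximal ones.'' If the collection $\{A_i\}_{i\in I}$ contains an infinite strictly increasing chain $A_{i_1}\subsetneq A_{i_2}\subsetneq\cdots$, then \emph{every} element of that chain gets discarded, there are no maximal elements to keep, and conclusion (i) fails because the union of the chain is lost. Zorn's lemma does not rescue you here: the union of an increasing chain of the $A_i$'s need not itself be one of the $A_i$'s, so the poset need not have maximal elements. The paper resolves exactly this by applying the Hausdorff maximal principle to extract, for each $k\in I$, a \emph{maximal totally ordered} subcollection $W(k)\ni A_k$, and then taking $A_{W(k)}=\bigcup_{j\in J(k)}A_j$ — a new open set, generally not among the $A_i$'s — which by (EY6) is still a $c(M)$-egg-yolk pair provided one picks a yolk $B_{i(k)}$ coming from an $A_{i(k)}\in W(k)$ that nearly realizes $\sup_{j}\diam(A_j)$ \emph{and} $\sup_j\diam(A_j')$ simultaneously (which is possible because $W(k)$ is totally ordered, hence so is $\{A_j'\}$). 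The non-nesting argument then runs at the level of chain-unions: if $\overline{A_{W(k)}}\subset A_{W(l)}$, compactness and total ordering of $W(l)$ would produce a single $A_j\in W(l)\setminus W(k)$ containing $\overline{A_{W(k)}}$, contradicting the \emph{maximality} of the chain $W(k)$ (and connectedness of $X$ rules out the degenerate $A_j\in W(k)$ case). There is also a final housekeeping step you omit, namely that $k\mapsto i(k)$ need not be injective, so one must further union the sets $A_{W(k)}$ sharing the same $i(k)$ and invoke (EY6) once more. So: right idea, but the infinite-chain case needs the maximal-chain-union device rather than a discard.
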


We remark that $\{B_j\}_{j\in J}$ and $\{B_j'\}_{j\in J}$ are just subcollections of $\{B_i\}_{i\in I}$ and $\{B_i'\}_{i\in I}$, respectively, which are given in the assumptions of Lemma \ref{lemma:eggyolk}. 

If $I$ were a finite index set, then one could choose $\{F_j\}_{j\in J}$ to be a subcollection of $\{A_i\}_{i\in I}$ satisfying \ref{egg:aux:i} and with the property that $F_i\not\subset F_j$ whenever $i\neq j$. Then \ref{egg:intersect_yolk} would immediately give the crucial property \ref{egg:aux:iii} in Lemma \ref{lemma:eggyolk_auxiliary}. In the case that $I$ is infinite, the idea is the same, but the proof is more involved. 

\begin{proof}[Proof of Lemma \ref{lemma:eggyolk_auxiliary}]
Note that the collection $\{A_i\}_{i\in I}$ is partially ordered with respect to inclusion. By the Hausdorff maximal principle \cite{Munkres:topology}*{\S 1.11, p.~69}, for each $k\in I$ there exists a maximal totally ordered set $W(k)=\{A_{j}\}_{j\in J(k)} \subset \{A_i\}_{i\in I}$ containing $A_{k}$. Since $f$ is injective and  $f(A_i)=A_i'$, the collection $\{A_j'\}_{j\in J(k)}$ is also a maximal totally ordered subcollection of $\{A_i'\}_{i\in I}$. Define $A_{W(k)}= \bigcup_{j\in J(k)} A_j$ and $A'_{W(k)}=f(A_{W(k)})=\bigcup_{j\in J(k)}A_j'$. Obviously, $\bigcup_{k\in I}A_{W(k)}= \bigcup_{i\in I}A_i$. We define 
\begin{align*}
L(W(k))&= \sup \{\diam(A_j): A_j \in W(k) \}\quad \textrm{and} \\
L'(W(k))&=  \sup \{\diam(A_j'): A_j \in W(k) \}.
\end{align*}
Note that both numbers are finite since $A_j\subset X$, $A_j'\subset Y$, and $X,Y$ are bounded spaces. We fix $A_{i_1},A_{i_2}\in W(k)$ such that $\diam(A_{i_1})\geq L(W(k))/2$ and $\diam(A_{i_2}')\geq L'(W(k))/2$. Since $W(k)$ is totally ordered, we have $A_{i_1}\supset A_{i_2}$ or $A_{i_2}\supset A_{i_1}$. Without loss of generality, assume that $A_{i_1}\supset A_{i_2}$. Since $f(A_i)=A_i'$ for each $i\in I$, we have $A_{i_1}'\supset A_{i_2}'$. Thus, $\diam(A_{i_1}')\geq \diam(A_{i_2}') \geq L'(W(k))/2$.  Summarizing, there exists $A_{i(k)}\in W(k)$ such that $\diam(A_{i(k)})\geq L(W(k))/2$ and $\diam(A_{i(k)}')\geq L'(W(k))/2$.  Note that $\diam(A_j) \leq L(W(k)) \leq 2\diam(A_{i(k)})$ and $\diam(A_j') \leq L'(W(k)) \leq 2\diam(A_{i(k)}')$ for each $A_j\in W(k)$. Moreover, for each $A_j\in W(k)$, we have $A_j\cap A_{i(k)}\neq \emptyset$ and $A_{j}'\cap A_{i(k)}'\neq \emptyset$ by the total ordering of $W(k)$.  By property \ref{egg:cluster} we conclude that $(A_{W(k)},B_{i(k)})$ and $(A_{W(k)}',B_{i(k)}')$ are $c(M)$-egg-yolk pairs for each $k\in I$. 

If $A_{W(k)}=X$ for some $k\in I$, then we set $j=i(k)$, $J=\{j\}$,  $F_j=A_{W(k)}$, $F_j'=f(F_j)$, and we have nothing to prove. Hence, we suppose that $X\setminus A_{W(k)}\neq \emptyset$, and thus $X\setminus A_k\neq \emptyset$, for each $k\in I$.  

We claim that 
\begin{align}\label{lemma:aux:comparable}
\diam(A_{W(k)}) \simeq_M \diam(A_{W(l)}) \quad \textrm{whenever}\quad B_{i(k)}\cap B_{i(l)}\neq \emptyset.
\end{align}
The same is true for $(A_{W(k)}',B_{i(k)}')$, $k\in I$. To see this, suppose that $B_{i(k)}\cap B_{i(l)}\neq \emptyset$. If  $\br{A_{W(k)}} \subset A_{W(l)}=\bigcup_{j\in  J(l)}A_j$, then by the compactness of $\br{A_{W(k)}}$ and the total ordering of $W(l)$, there exists an open set $A_j \in W(l)$ such that $\br{A_{W(k)}}\subset A_j$. If $A_j\in W(k)$, then $\br{A_{W(k)}}=A_j$, so $A_j$ is clopen. By the connectedness of $X$, $A_j=X$, a contradiction. Therefore, $A_j\in W(l)\setminus W(k)$. This implies that $W(k)\cup \{A_j\}$ is totally ordered, which contradicts the maximality of $W(k)$. Therefore, $\br{A_{W(k)}} \not\subset A_{W(l)}$ and by \ref{egg:intersect_yolk} we have  $\diam(A_{W(k)}) \gtrsim_M \diam(A_{W(l)})$. By reversing the roles of $k$ and $l$, we see that $\diam(A_{W(k)}) \simeq_M \diam(A_{W(l)})$.

If the mapping $k\mapsto i(k)$ were injective on $I$, then the proof would have been completed with $J=i(I)\subset I$. In general, this might not be the case. For $j\in J=i(I)$, we define $F_j$ to be the union of all sets $A_{W(k)}$ such that $i(k)=j$.  Since $(A_{W(k)},B_j)$ is a $c(M)$-egg-yolk pair whenever $i(k)=j$, we conclude by \ref{egg:comparable} that 
$$\diam(A_{W(k)})\simeq_M \diam(B_j).$$
By property \ref{egg:cluster}, $(F_j,B_j)$ is a $c'(M)$-egg-yolk pair. We also set $F_j'=f(F_j)$ and similarly, $(F_j',B_j')$ is a $c'(M)$-egg-yolk pair. Without loss of generality, assume that $X\setminus F_j\neq \emptyset$ for each $j\in J$.  We only have to justify \ref{egg:aux:iii}. Suppose $B_{j_1}\cap B_{j_2}\neq \emptyset$ for some $j_1,j_2\in J$ and consider $k,l\in I$ with $i(k)=j_1$ and $i(l)=j_2$. Then, by \ref{egg:comparable} and \eqref{lemma:aux:comparable}, we have
\begin{align*}
\diam(F_{j_1})&\simeq_M \diam(B_{j_1})\simeq_M \diam(A_{W(k)})\simeq_M \diam(A_{W(l)})\\
&\simeq_M \diam(B_{j_2})\simeq_M\diam(F_{j_2}).
\end{align*}
The same argument applies to $(F_j',B_j')$, $j\in J$. This completes the proof.
\end{proof}

\begin{proof}[Proof of Lemma \ref{lemma:eggyolk}]
We will show that given $\{(A_i,B_i)\}_{i\in I}$ and $\{(A_i',B_i')\}_{i\in I}$ as in the statement, there exist families $\{(D_j,E_j)\}_{j\in J}$ and $\{(D_j',E_j')\}_{j\in J}$ of $c(M)$-egg-yolk pairs, where $\{E_j\}_{j\in J}$ and $\{E_j'\}_{j\in J}$ are subcollections of $\{B_i\}_{i\in I}$ and $\{B_i'\}_{i\in I}$, respectively,  satisfying conclusions \ref{yolk:union}, \ref{yolk:image}, and such that the sets $\{E_j'\}_{j\in J}$ are pairwise disjoint; that is, only one half of conclusion \ref{yolk:disjoint} is satisfied. Then using this statement for $f^{-1}$ and for the given $\{(D_j,E_j)\}_{j\in J}$ and $\{(D_j',E_j')\}_{j\in J}$ (in place of $\{(A_i,B_i)\}_{i\in I}$ and $\{(A_i',B_i')\}_{i\in I}$), we may find families $\{(\widetilde D_j,\widetilde E_j)\}_{j\in \widetilde J}$ and $\{(\widetilde D_j',\widetilde E_j')\}_{j\in \widetilde J}$ of $\widetilde c(M)$-egg-yolk pairs, satisfying the full conclusions of the lemma.

If $\diam(A_i)=0$ for some $i\in I$, then $A_i$ is a singleton and is clopen. The connectedness of $X$ implies that $X$ is a singleton. In this case there is nothing to prove, so we assume that $\diam(A_i)>0$ for each $i\in I$. 

By Lemma \ref{lemma:eggyolk_auxiliary}, we may assume that the given $\{(A_i,B_i)\}_{i\in I}$ and $\{(A_i',B_i')\}_{i\in I}$ are families of $c'(M)$-egg-yolk pairs with $f(A_i)=A_i'$ for each $i\in I$ and with the additional property that 
\begin{align}\label{yolk:reduction}
	\diam(A_i')&\simeq_M\diam(A_j') \quad \textrm{whenever}\quad B_i'\cap B_j'\neq \emptyset.
\end{align}

We set $L=\sup_{i\in I} \diam(A_i)$, which is positive and finite, since the space $X$ is bounded. Define $\mathcal F_0=\emptyset$ and $k_0=0$. Suppose that $\mathcal F_j\subset I$ and $k_j\in \Z$ have been defined for $j\in \{0,\dots,m\}$ such that $k_j$ is increasing in $j\in \{0,\dots,m\}$ and suppose that we have obtained $c(M)$-egg-yolk pairs $(D_i,E_i)$ and $(D_i',E_i')$ for $i\in \{1,\dots,k_m\}$, where $E_i\in \{B_j\}_{j\in I}$ and $E_i'\in \{B_j'\}_{j\in I}$, such that
\begin{enumerate}[\upshape(1),leftmargin=1.1cm]\smallskip
	\item \label{yolk:i:inclusion}
	$\mathcal F_0\cup \cdots\mathcal \cup \mathcal F_m\subset \{i\in I: \diam(A_i)>2^{-m}L\} \subset \{i\in I: A_i\subset D_j\,\, \textrm{for some}\,\, j\in \{1,\dots,k_m\} \}$ and $\bigcup_{i=1}^{k_m}D_i\subset \bigcup_{i\in I}A_i$,\smallskip
	\item\label{yolk:i:image} $f(D_i)=D_i'$, $i\in \{1,\dots, k_m\}$,\smallskip
	\item\label{yolk:i:disjoint} the sets $E_i'$, $i\in \{1,\dots, k_m\}$, are pairwise disjoint, and\smallskip
	\item\label{yolk:i:containment}  $\{ i\in I: B_i'\cap E_j'\neq \emptyset \,\, \textrm{for some}\,\, j\in \{1,\dots,k_{m}\}\}\subset \{i\in I: A_i\subset D_j\,\, \textrm{for some}\,\,\\ j\in \{1,\dots,k_m\} \}.$  
\end{enumerate}
Note that all of these statements are vacuously true for $m=0$.  Assuming that the above statements are true for each $m\in \N\cup\{0\}$, we see that  \ref{yolk:i:inclusion}, \ref{yolk:i:image}, and \ref{yolk:i:disjoint} give \ref{yolk:union}, \ref{yolk:image}, and \ref{yolk:disjoint}, respectively, completing the proof of the lemma.

Now we show the inductive step. We define   
$$\mathcal F_{m+1}= \{ i\in I : 2^{-m-1}L< \diam(A_i)\leq 2^{-m} L   \quad \textrm{and}\quad A_i\not\subset  D_j,\,\, j\in \{1,\dots,k_m\}\}.$$
If $\mathcal F_{m+1}=\emptyset$, we define $k_{m+1}=k_m$. Suppose that $\mathcal F_{m+1}\neq \emptyset$ and let $i_1\in \mathcal F_{m+1}$. Since $A_{i_1}\not\subset D_j$ for all $j\in \{1,\dots,k_m\}$, we conclude by the induction assumption \ref{yolk:i:containment} that $B_{i_1}' \cap E_j'=\emptyset$ for all $j\in \{1,\dots,k_m\}$. Suppose that $B_{i_1}'\cap B_j'\neq \emptyset $ for some $j\in I$ with $A_j\not\subset D_i$ for all $i\in \{1,\dots,k_m\}$. By \eqref{yolk:reduction} we conclude that $\diam(A_{i_1}')\simeq_M \diam(A_{j}')$. We define $E_{k_m+1}'=B_{i_1}'$ and $D_{k_m+1}'$ to be the union of $A_{i_1}'$ with the sets $A_j'$ such that $B_{i_1}'\cap B_j'\neq \emptyset$ and  $A_j\not\subset D_i$ for all $i\in \{1,\dots,k_m\}$; see Figure \ref{figure:eggyolk_cluster}. By \ref{egg:cluster}, we conclude that $(D_{k_m+1}',E_{k_m+1}')$ is a $c(M)$-egg-yolk pair. Define $D_{k_m+1}=f^{-1}(D_{k_m+1}')$ and $E_{k_m+1}=B_{i_1}$. Note that $D_{k_m+1}$ is the union of $A_{i_1}$ with some sets $A_j$ such that $A_{i_1}\cap A_j\neq \emptyset$ (since $A_{i_1}'\cap A_j'\supset B_{i_1}'\cap B_j'\neq\emptyset$) and $A_j\not\subset D_i$ for $i\in \{1,\dots,k_m\}$; thus, by the induction assumption \ref{yolk:i:inclusion} we have $\diam(A_j)\leq 2^{-m}L < 2\diam(A_{i_1})$. It follows that $(D_{k_m+1},E_{k_m+1})$ is a $c(M)$-egg-yolk pair by \ref{egg:cluster}. We remark that by construction we have $\{ i\in I: B_i'\cap E_{k_m+1}'\neq \emptyset  \}\subset \{i\in I: A_i\subset D_j\,\, \textrm{for some}\,\, j\in \{1,\dots,k_m+1\} \}.$

\begin{figure}
\centering
	\begin{overpic}[width=.8\linewidth]{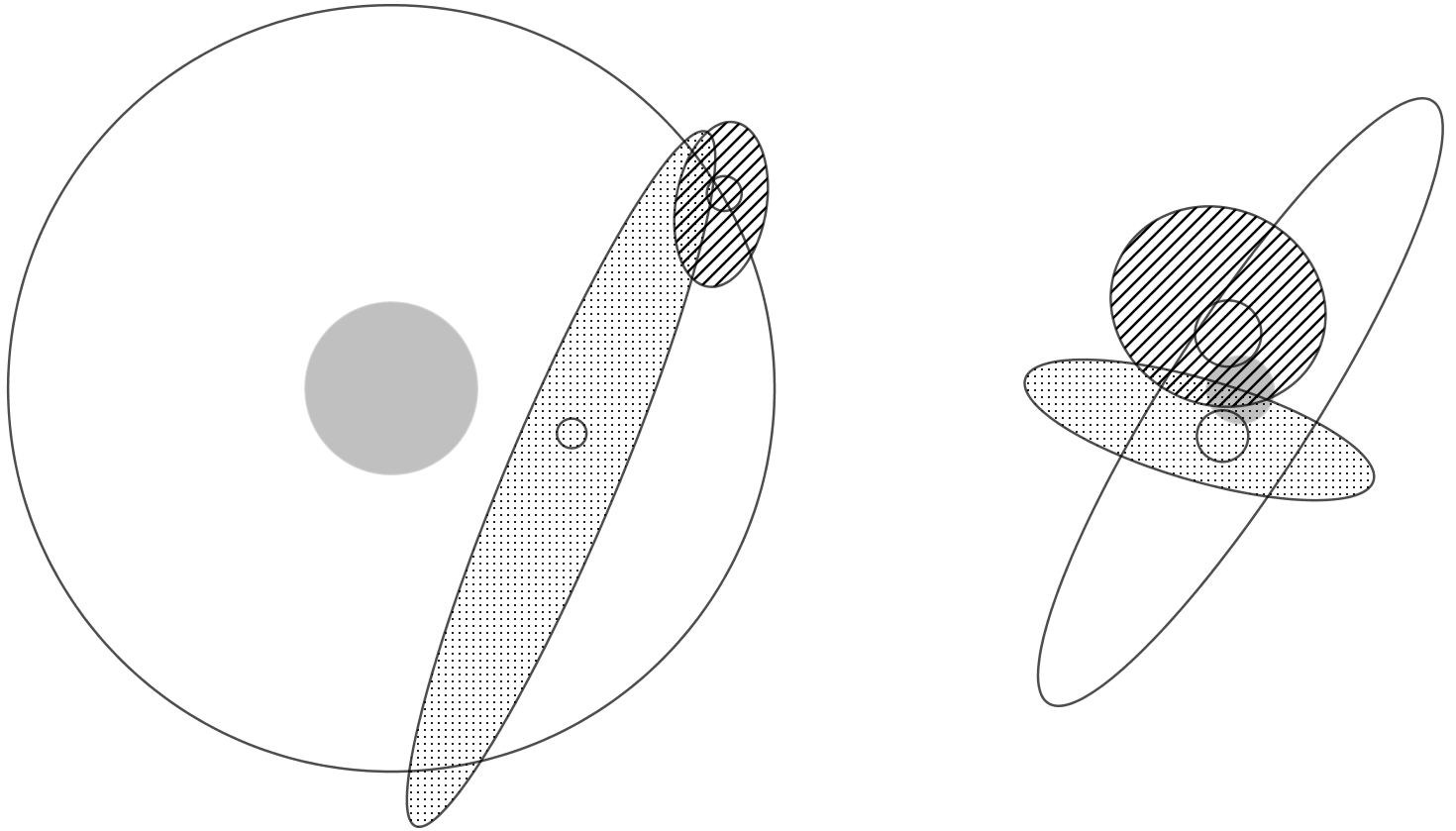}
		\put (25,30) {$B_{i_1}$}
		\put (5,40) {$A_{i_1}$}
		\put (94,46) {$A_{i_1}'$}
		\put (53,48) {$A_j$}
		\put (84,44) {$A_j'$}
		\put (60,35) {$\longrightarrow$}
		\put (62,38) {$f$}
	\end{overpic}
	
\vspace{0.3em}

	\begin{overpic}[ width=.8\linewidth]{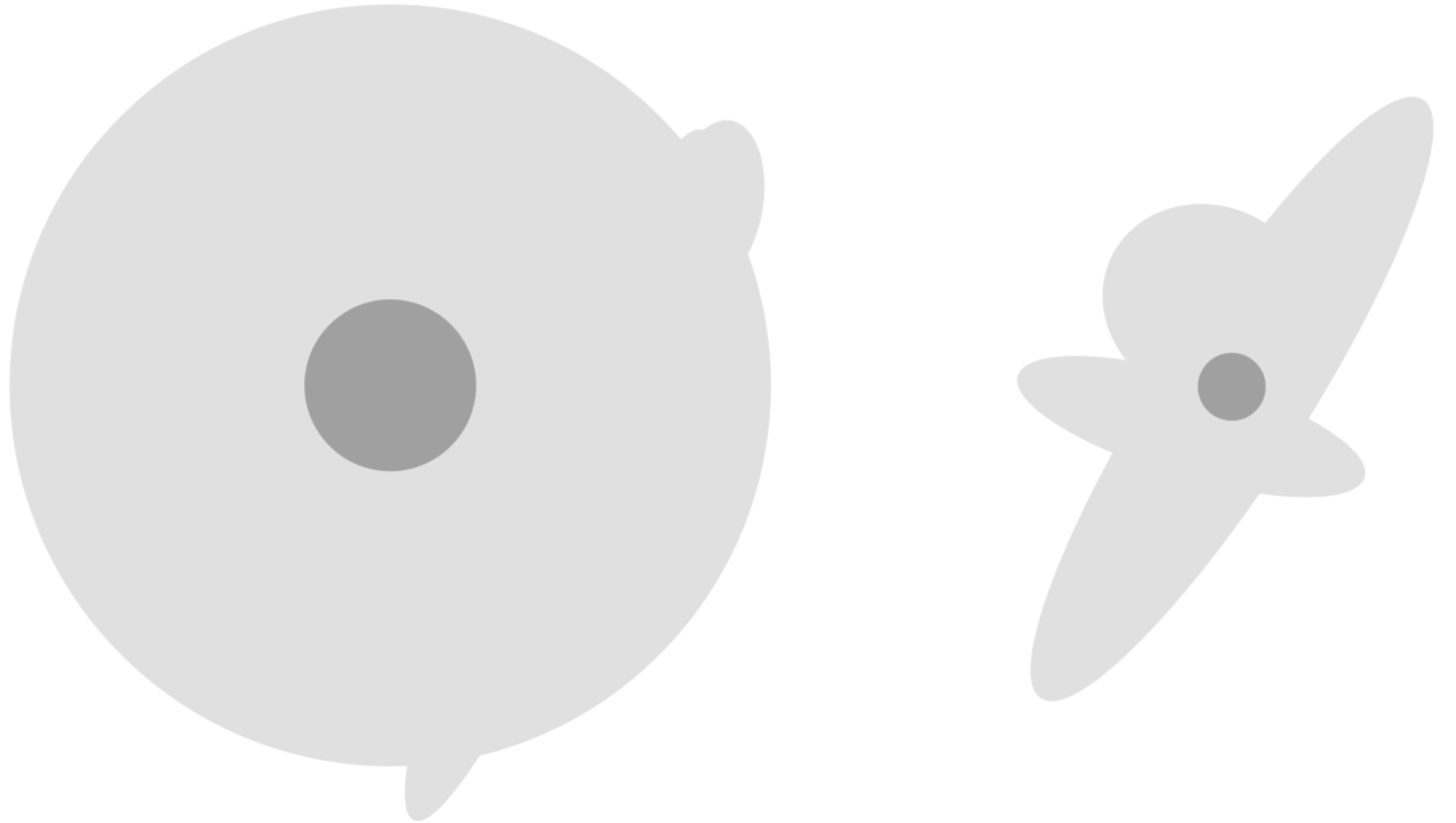}
		\put (22,30) {$E_{k_m+1}$}
		\put (5,40) {$D_{k_m+1}$}
		\put (80,46) {$D_{k_m+1}'$}
		\put (80,34) {$E_{k_m+1}'$}
		\put (60,35) {$\longrightarrow$}
		\put (62,38) {$f$}
	\end{overpic}
	\caption{Top figure: $B_{i_1}'\cap B_j'\neq \emptyset$, so $\diam(A_{i_1}')\simeq_M \diam (A_j')$. On the other hand, $B_{i_1}$ need not intersect $B_j$ and $\diam(A_j)$ might be much smaller than $\diam(A_{i_1})$. Bottom figure: Formation of $D_{k_m+1}$ by taking the union of $A_{i_1}$ with sets $A_j$ such that $B_{i_1}'\cap B_j'\neq \emptyset$.}\label{figure:eggyolk_cluster}
\end{figure}

We continue in the same way, by picking $i_2\in \mathcal F_{m+1}\setminus \{i_1\}$ such that $A_{i_2}\not\subset D_{k_m+1}$. If no such $i_2$ exists, we define $k_{m+1}=k_m+1$. Note that $B_{i_2}'\cap E_{k_m+1}'=\emptyset$ by the choice of $E_{k_m+1}'$, and $B_{i_2}'\cap E_j'=\emptyset$ for each $j\in \{1,\dots,k_m\}$ by the induction assumption \ref{yolk:i:containment}. We define $E_{k_m+2}'=B_{i_2}'$ and $D_{k_m+2}'$ to be the union of $A_{i_2}'$ with the sets $A_j'$ such that $B_{i_2}'\cap B_j'\neq \emptyset$ and $A_j\not\subset  D_i$ for $i\in \{1,\dots,k_{m}+1\}$. Also, set $D_{k_m+2}=f^{-1}(D_{k_m+2}')$ and $E_{k_m+2}=B_{i_2}$.  In this way we produce  $c(M)$-egg-yolk pairs $(D_{k_m+2}',E_{k_m+2}')$ and $(D_{k_m+2},E_{k_m+2})$ such that $E_{k_m+2}'\cap E_{j}'=\emptyset$ for $j\in \{1,\dots,k_m+1\}$. As before, by construction we have $\{ i\in I: B_i'\cap E_{k_m+2}'\neq \emptyset  \}\subset \{i\in I: A_i\subset D_j\,\, \textrm{for some}\,\, j\in \{1,\dots,k_m+2\} \}.$

We claim that this process will stop after finitely many steps. That is, there exists $k_{m+1}> k_m$ with the property that there is no $i\in \mathcal F_{m+1}\setminus \{i_1,\dots,i_{k_{m+1}-k_m}\}$ such that $A_{i}\not\subset D_{j}$ for each $j\in \{k_m+1,\dots,k_{m+1}\}$.  Indeed, by the uniform continuity of  $f^{-1}$, we have 
\begin{align}\label{yolk:radii_induction}
\inf_{i\in \mathcal F_{m+1}} \diam(A_i')>0.
\end{align}
Each $E_i'$, $i=k_m+1,k_m+2,\dots$, is a ball of radius comparable to $\diam(D_i')$; thus, $\diam(E_i')$ is bounded below away from $0$ by \eqref{yolk:radii_induction}. Moreover, the balls $E_i'$ are disjoint and are contained in the compact space $Y$. This shows that this process will necessarily end after a number $k_{m+1}-k_m$ of steps. We also conclude that if $i\in \mathcal F_{m+1}\setminus \{i_1,\dots,i_{k_{m+1}-k_m}\}$, then $A_{i}\subset D_j$ for some $j\in \{k_m+1,\dots,k_{m+1}\}$; this is also trivially true for $i\in \{i_1,\dots,i_{k_{m+1}-k_m}\}$.

We first verify \ref{yolk:i:inclusion} for the index $m+1$. By the definition of $D_j$,  $j\in \{k_m+1,\dots,k_{m+1}\}$, and the induction assumption \ref{yolk:i:inclusion} it is clear that $\bigcup_{i=1}^{k_{m+1}} D_i \subset \bigcup_{i\in I}A_i$. This explains the last part of \ref{yolk:i:inclusion}. If $i\in \mathcal F_{m+1}$, then by the definition of $\mathcal F_{m+1}$ we have $\diam(A_i)>2^{-m-1}L$. If $2^{-m-1}L<\diam(A_i)\leq 2^{-m}L$,  then either $i\in \mathcal F_{m+1}$, so  $A_{i}\subset D_j$ for some $j\in \{k_m+1,\dots,k_{m+1}\}$, or $A_i\subset D_j$ for some $j\leq k_m$. In combination with the induction assumption, this shows the inclusions in \ref{yolk:i:inclusion}. By construction and the induction assumption, \ref{yolk:i:image} and \ref{yolk:i:disjoint}, and \ref{yolk:i:containment}  are automatically satisfied for the index $m+1$. Thus, the proof of the inductive step is completed.
\end{proof}

\section{Proof of Theorem \ref{theorem:main}}\label{section:proof}

\subsection{Preliminaries}
The \textit{$1$-dimensional Hausdorff measure} $\mathscr H^1(A)$ of a set $A\subset \R^n$ is defined by
$$\mathscr{H}^{1}(A)=\lim_{\delta \to 0} \mathscr{H}_\delta^{1}(A)=\sup_{\delta>0} \mathscr{H}_\delta^{1}(A),$$
where
$$
\mathscr{H}_\delta^{1}(A)=\inf \left\{ \sum_{j=1}^\infty \operatorname{diam}(U_j): A \subset \bigcup_j U_j,\, \operatorname{diam}(U_j)<\delta \right\}.
$$
If $\delta=\infty$, the quantity $\mathscr{H}_\infty^{1}(A)$ is called the \textit{$1$-dimensional Hausdorff content} of $A$ and is an outer measure on subsets of $\R^n$. An elementary fact is that
\begin{align*}
\textrm{$\h^1(A)=0$ if and only if $\h^1_{\infty}(A)=0$. }
\end{align*}
We always have
$$\min \{\h^1(A),\diam(A) \}\geq \h^1_\infty(A)$$
and if the set $A$ is connected, then 
\begin{align*}
\h^1_{\infty}(A)= \diam(A).
\end{align*}
See \cite{BuragoBuragoIvanov:metric}*{Lemma 2.6.1, p.~53} for an argument.

A \textit{path} or \textit{curve} is a continuous function $\gamma\colon I \to \R^n$, where $I\subset \R$ is a compact interval. The \textit{trace} of a path $\gamma$ is the image $\gamma(I)$ and will be denoted by $|\gamma|$. The \textit{endpoints} of a path $\gamma\colon [a,b]\to \R^n$ are the points $\gamma(a),\gamma(b)$.

Let $\Gamma$ be a family of curves in $\R^n$. A Borel function $\rho\colon \R^n \to [0,\infty]$ is \textit{admissible} for the path family $\Gamma$ if $$\int_{\gamma}\rho\, ds\geq 1$$
for all rectifiable paths $\gamma\in \Gamma$. We define the \textit{$n$-modulus} of $\Gamma$ as 
$$\md_n \Gamma = \inf_\rho \int \rho^n,$$
where the infimum is taken over all admissible functions $\rho$ for $\Gamma$. By convention, $\md_n \Gamma = \infty$ if there are no admissible functions for $\Gamma$. Note that unrectifiable paths do not affect modulus. Hence, we will assume that families of $n$-modulus zero appearing in the next considerations contain all unrectifiable paths; for example, see \ref{m:integrable} below. We will use the following standard facts about modulus:
\begin{enumerate}[label=(M\arabic*)]
\item The modulus $\md_n$ is an outer measure in the space of all curves in $\R^n$. In particular, it obeys the monotonicity and countable subadditivity laws. \label{m:outer_measure}
\item If $\Gamma_0$ is a path family with $\md_n\Gamma_0=0$, then the family of paths $\gamma$ that have a subpath in $\Gamma_0$ also has $n$-modulus zero. \label{m:subpath}
\item If $\Omega\subset \R^n$ is an open set and $\rho\colon \Omega\to [0,\infty]$ is a Borel function with $\rho\in L^n_{\loc}(\Omega)$, then there exists a path family $\Gamma_0$  with $\md_n\Gamma_0=0$ such that for each path $\gamma\notin \Gamma_0$ with trace in $\Omega$ we have 
$\int_{\gamma}\rho\, ds<\infty$; here we implicitly assume that if $\gamma\notin \Gamma_0$, then $\gamma$ is rectifiable.\label{m:integrable}
\item If $\rho\colon \R^n\to [0,\infty]$ is a Borel function with $\rho=0$ a.e., then there exists a path family $\Gamma_0$ with $\md_n\Gamma_0=0$ such that for each path $\gamma\notin \Gamma_0$ we have $\int_{\gamma}\rho\, ds=0$.\label{m:zero_ae}
\end{enumerate}
See \cite{Vaisala:quasiconformal}*{Chapter 1, pp.~16--20} and \cite{HeinonenKoskelaShanmugalingamTyson:Sobolev}*{Section 5.2} for more details about modulus and proofs of these facts.

\subsection{Finite distortion implies absolute continuity}
The next theorem is the main technical result leading to the proof of the main theorem, Theorem \ref{theorem:main}. We use the notation $m_n^*$ for the $n$-dimensional outer Lebesgue measure in $\R^n$.

\begin{theorem}\label{theorem:upper_gradient}
Let $\Omega\subset \R^n$ be an open set and $f\colon \Omega\to \R^n$ be a topological embedding. Let $X\subset \Omega$ be a set and suppose that there exists a constant $H\geq 1$ such that for all $x\in X$ we have
\begin{align*}
E_f(x)\leq H.
\end{align*} 
Then there exists a Borel function $\rho_f\colon \Omega\to [0,\infty]$ with the following properties.
\begin{enumerate}[\upshape(i)]
	\item \textup{(Support)} There exists a Borel set $U\subset \Omega$ such that $X\subset U$, $m_n(U)=m_n^*(X)$, $m_n(f(U))=m_n^*(f(X))$, and $\rho_f$ is supported on $U$.\label{theorem:upper_gradient:i} 
	\item \textup{(Upper gradient)} There exists a path family $\Gamma_0$ with $\md_n\Gamma_0=0$ such that for all paths $\gamma\notin \Gamma_0$ with trace in $\Omega$ we have
	$$\h^1_\infty(f(|\gamma|\cap X))  \leq \int_\gamma \rho_f\, ds.$$ \label{theorem:upper_gradient:ii}
	\item \textup{(Quasiconformality)} For every Borel set $V\subset \Omega$ we have
	\begin{align*}
\int_V \rho_f^n \leq C(n,H) m_n(f(U\cap V)).
\end{align*}\label{theorem:upper_gradient:iii}
\end{enumerate}
\end{theorem}

\begin{proof} We split the proof into several parts for the convenience of the reader.

\smallskip
\noindent
\textbf{Reduction to a connected domain.} First, we reduce to the case that $\Omega$ is connected. Suppose that $\Omega$ is disconnected and that the theorem is true in each connected component $\Omega_j$, $j\in J$, of $\Omega$. That is, there exists a Borel function $\rho_f$ on $\Omega$ satisfying \ref{theorem:upper_gradient:i}--\ref{theorem:upper_gradient:iii} in each $\Omega_j$.  We verify that these properties hold in all of $\Omega$. By \ref{theorem:upper_gradient:i}, for each $j\in J$, there exists a Borel set $U_j\subset \Omega_j$ such that $X\cap \Omega_j\subset U_j$, $m_n(U_j)=m_n^*(X\cap \Omega_j)$, $m_n(f(U_j))=m_n^*(f(X\cap \Omega_j))$, and $\rho_f|_{\Omega_j}$ is supported on $U_j$. We set $U=\bigcup_{j\in J}U_j$ and observe that 
$$m_n^*(X)=\sum_{j\in J}m_n^*(X\cap \Omega_j)=\sum_{j\in J}m_n(U_j)=m_n(U)$$
and similar equalities hold for $m_n^*(f(X))$ and $m_n(f(U))$. This verifies \ref{theorem:upper_gradient:i}. By \ref{theorem:upper_gradient:ii}, for each $j\in J$, there exists a curve family $\Gamma_j$ of $n$-modulus zero such that for all paths $\gamma\notin \Gamma_j$ with trace in $\Omega_j$ we have
$$\h^1_\infty(f(|\gamma|\cap X))  \leq \int_\gamma \rho_f\cdot \x_{\Omega_j}\, ds.$$
We let $\Gamma_0=\bigcup_{j\in J}\Gamma_j$, which is a family of $n$-modulus zero by the subadditivity of modulus. Then the inequality in \ref{theorem:upper_gradient:ii} is true for all curves $\gamma$ in $\Omega$ that are outside $\Gamma_0$. Finally, \ref{theorem:upper_gradient:iii} is an elementary consequence of the countable additivity of $m_n$.

\smallskip
\noindent
\textbf{Construction of approximate gradients.} From now on, we assume that $\Omega$ is connected. Let $\{V_k\}_{k\in \N}$ be an exhaustion of $\Omega$ by connected open sets such that $\br{V_k}\subset\subset  V_{k+1} \subset \Omega$ for each $k\in \N$. We write $X= \bigcup_{k=1}^\infty X_k$, where $X_k=X\cap V_k$, $k\in \N$. Consider a sequence of open sets $U_{k+1}\subset U_{k}\subset \Omega $, $k\in \N$, such that $X\subset U\coloneqq\bigcap_{k=1}^\infty U_k$, $m_n^*(X)=m_n(U)$, and $m_n^*(f(X))=m_n^*(f(U))$.  

We fix $k\in \N$. Since $E_f\leq H$ on $X$, for each $x\in X_{k}$ there exists an arbitrarily small open set $A_x\subset U_k\cap V_k$ containing $x$ such that
\begin{align*}
&E(A_x) < 2H\,\,\, \textrm{and} \,\,\,  E(f(A_x))<2H;
\end{align*}
recall Definition \ref{definition:eccentric}. These conditions imply that there exists an open ball $B_x$ such that $B_x\subset 2B_x\subset A_x\subset 4HB_x$ and an open ball $B_x'$ such that $B_x'\subset 2B_x'\subset A_x'\subset 4HB_x'$. By considering a smaller set $A_x$, we may also require that
\begin{align*}
c_2(H)B_x \subset U_k\cap V_k\,\,\,  \textrm{and}\,\,\,  \diam(B_x)< c_2(H)^{-1}k^{-1}
\end{align*}
where $c_2(H)$ is a positive constant, to be specified. Thus, $\{(A_x,B_x)\}_{x\in X_{k}}$ and $\{(A_x',B_x')\}_{x\in  X_{k}}$ are families of $(4H)$-egg-yolk pairs (recall the definition from Section \ref{section:egg}) in the compact, connected sets $\br {V_k}$ and $f( \br {V_k})$, respectively.

By the egg-yolk covering lemma, Lemma \ref{lemma:eggyolk},  there exist families $\{(A_i,B_i)\}_{i\in I}$ and $\{(A_i',B_i')\}_{i\in I}$ of $c_1(H)$-egg-yolk pairs in $\br{V_k}$ and $f(\br{V_k})$, respectively, such that $f(A_i)=A_i'$ for each $i\in I$, $X_{k}\subset \bigcup_{i\in I}A_i \subset U_k\cap V_k$, and the families $\{B_i\}_{i\in I}$ and $\{B_i'\}_{i\in I}$ are disjointed. Moreover, $\{B_i\}_{i\in I}$ is a subcollection of $\{B_x\}_{x\in X_{k}}$. We now choose $c_2(H)=c_1(H)+1$, so
\begin{align}\label{theorem:upper_gradient:b_inequalities}
(c_1(H)+1)B_i \subset U_k\cap V_k\,\,\,  \textrm{and}\,\,\,  (c_1(H)+1)\diam(B_i)< k^{-1}.
\end{align}
We note that
\begin{align}\label{theorem:homeo:inclusions}
	A_i \subset c_1(H)B_i \quad \textrm{and} \quad A_i'\subset c_1(H)B_i'.
\end{align}
In addition, since $B_i'$ is a ball, we have
\begin{align}\label{theorem:homeo:diam}
\diam(A_i')^n \leq  c_1(H)^n \diam(B_i')^n \lesssim_{n,H} m_n( B_i').
\end{align}

We set $r_i$ to be the radius of the ball $B_i$, $i\in I$. Consider the function 
$$\rho_k= \sum_{i\in I} \frac{\diam(A_i')}{r_{i}} \x_{(c_1(H)+1)B_i}.$$
By \eqref{theorem:upper_gradient:b_inequalities}, we see that $\rho_k$ is supported on $U_k$. Note that if $A_i\cap |\gamma|\neq \emptyset$ for some rectifiable curve $\gamma$ with $\diam(|\gamma|)>1/k$, then  by \eqref{theorem:homeo:inclusions} we have
$$\int_{\gamma} \x_{(c_1(H)+1)B_i}\, ds \geq r_{i},$$
provided that $|\gamma|$ is not contained in $(c_1(H)+1)B_i$, which is guaranteed by \eqref{theorem:upper_gradient:b_inequalities}. In addition, if $K\subset \Omega$ is a compact set and $A_i\cap K\neq \emptyset$, then by \eqref{theorem:upper_gradient:b_inequalities}  the set $(c_1(H)+1)B_i$ is contained in the open $(1/k)$-neighborhood of $K$, which we denote by $N_{1/k}(K)$. Therefore,
$$ \int_{\gamma}\x_{(c_1(H)+1)B_i} \x_{N_{1/k}(K)}\, ds \geq r_i.$$
We conclude that for each compact set $K\subset \Omega$ and every rectifiable curve $\gamma$ with $\diam(|\gamma|)>1/k$ we have
\begin{align}\label{theorem:upper_gradient:rho_k}
\mathscr H^1_{\infty}( f(|\gamma|\cap X_k\cap K))\leq \sum_{\substack{i:A_i\cap |\gamma| \neq \emptyset \\ A_i\cap K\neq \emptyset}}\diam(A_i') \leq  \int_{\gamma} \rho_k \x_{N_{1/k}(K)}\, ds.
\end{align}

By Lemma \ref{lemma:bojarski}, the fact that $\{B_i\}_{i\in I}$ is disjointed,  \eqref{theorem:homeo:diam}, and the fact that $\{B_i'\}_{i\in I}$ is disjointed, we have
\begin{align*}
\int \rho_k^n &\lesssim_{n,H} \int \left( \sum_{i\in I} \frac{\diam(A_i')}{r_{i}} \x_{B_i} \right)^n\simeq_{n,H}  \int \sum_{i\in I} \frac{\diam(A_i')^n}{r_{i}^n} \x_{B_i}\\
&\simeq_{n,H}  \sum_{i\in I} \diam(A_i')^n \lesssim_{n,H}\sum_{i\in I} m_n(B_i') \lesssim_{n,H}  m_n(f(U_k)).
\end{align*}
Moreover, for each compact set $K\subset \Omega$, the same computation shows that
\begin{align}\label{theorem:quasiconformal:rho_k}
\int_K \rho_k^n \lesssim_{n,H} m_n(f(U_k \cap N_{1/k} (K))).
\end{align}
Observe that the latter measure is finite for large $k\in \N$ and bounded as $k\to \infty$. 

\smallskip
\noindent
\textbf{Compactness argument.}
The uniform upper bounds of \eqref{theorem:quasiconformal:rho_k}, combined with the Banach--Alaoglu theorem \cite{HeinonenKoskelaShanmugalingamTyson:Sobolev}*{Theorem 2.4.1} and a diagonal argument imply that there exists a Borel function $\rho_f\colon \Omega\to [0,\infty]$ with $\rho_f\in L^n_{\loc}(\Omega)$ and a subsequence of $\rho_k$  that converges to $\rho_f$ weakly in $L^n(K)$ for each compact set $K\subset \Omega$; see \cite{HeinonenKoskelaShanmugalingamTyson:Sobolev}*{Lemma 3.3.19} for a variant of this statement. For simplicity, we denote the subsequence by $\rho_k$, $k\in \N$.  

The fact that $U_{k+1}\subset U_k$, $k\in \N$, implies that $\rho_k$ is supported on $U_k$, $k\in \N$.  Passing to the weak limit, we conclude that $\rho_f$ is supported on $U=\bigcap_{k=1}^\infty U_k$, as required in \ref{theorem:upper_gradient:i}. Let $V\subset \Omega$ be a Borel set and $K\subset V$ be a compact set. By the weak convergence of $\rho_k$ to $\rho_f$ in $L^n(K)$ and  \eqref{theorem:quasiconformal:rho_k} we have
\begin{align*}
\int_K \rho_f^n \leq \liminf_{k\to\infty}\int_K \rho_k^n \lesssim_{n,H} m_n(f(U \cap K)) \lesssim_{n,H} m_n(f(U\cap V)).
\end{align*}
The inner regularity of Lebesgue measure  completes the proof of \ref{theorem:upper_gradient:iii}.

Finally, we show \ref{theorem:upper_gradient:ii}. By Mazur's lemma \cite{HeinonenKoskelaShanmugalingamTyson:Sobolev}*{Section 2.3}, for each compact set $K\subset \Omega$ there exist convex combinations $\widetilde \rho_k$ of $\rho_k,\rho_{k+1},\dots,\rho_{m(k)}$, where $m(k)\geq k$, such that  $\widetilde \rho_k$ converges strongly to $\rho_f$ in $L^n(V)$ for some neighborhood $V$ of $K$. By \eqref{theorem:upper_gradient:rho_k} and the fact that $X_k\subset X_{k+1}$, $k\in \N$, we have
\begin{align*}
\h^1_{\infty}(f(|\gamma|\cap X_{k}\cap K )) \leq \int_{\gamma}\widetilde \rho_k \x_{N_{1/k} (K)}\, ds
\end{align*}
whenever $\diam(|\gamma|)>1/k$. Moreover, $X\cap K\subset X_k$ for all sufficiently large $k\in \N$, so
$$\h^1_{\infty}(f(|\gamma|\cap X  \cap K)) \leq \int_{\gamma} \widetilde \rho_k \x_{N_{1/k}(K)}\, ds$$
whenever $\diam(|\gamma|)>1/k$. Observe that $\widetilde \rho_k \x_{N_{1/k}(K)}$ converges to $\rho_f\x_K$ in $L^n(\R^n)$. By Fuglede's lemma \cite{Vaisala:quasiconformal}*{Theorem 28.1} there exists a path family $\Gamma(K)$ of $n$-modulus zero such that for all paths $\gamma\notin \Gamma(K)$ we have
$$\int_{\gamma} \widetilde \rho_k \x_{N_{1/k}(K)}\, ds \to \int_{\gamma}\rho_f \x_{K}\, ds$$
as $k\to\infty$. Given a non-constant path $\gamma\notin \Gamma(K)$, we then have
$$\h^1_{\infty}(f(|\gamma|\cap X  \cap K)) \leq \int_{\gamma}\rho_f\, ds.$$
Let $\Gamma_0= \bigcup_{k=1}^\infty \Gamma(\br {V_k})$, which is a family of $n$-modulus zero.  If $\gamma\notin \Gamma_0$ is a non-constant path with trace in $\Omega$, then there exists $ k\in \N$ such that $|\gamma|\subset \br{V_k}$, so
$$\h^1_{\infty}(f(|\gamma|\cap X  )) = \h^1_{\infty}(f(|\gamma|\cap X \cap \br{V_k} ))\leq  \int_{\gamma}\rho_f\, ds.$$
Constant paths satisfy as well this inequality trivially.
\end{proof}

\subsection{Completing the proof of Theorem \ref{theorem:main}}
The following statement is a consequence of Theorem \ref{theorem:upper_gradient}.

\begin{corollary}\label{corollary:upper_gradient}
Let $\Omega\subset \R^n$ be an open set and $f\colon \Omega\to \R^n$ be a topological embedding. Let $G\subset \Omega$ be a set such that for all $x\in G$ we have
\begin{align*}
E_f(x)<\infty
\end{align*} 
and either $m_n(G)=0$ or $m_n(f(G))=0$. Then there exists a path family $\Gamma_0$ with $\md_n \Gamma_0=0$ such that for all paths $\gamma\notin\Gamma_0$ with trace in $\Omega$ we have 
	$$\mathscr H^1( f(|\gamma|\cap G))=0.$$
\end{corollary}

\begin{proof}
We write $G=\bigcup_{k=1}^\infty G_k$, where $E_f(x)\leq k$ for $x\in G_k$. We fix $k\in \N$ and consider the function $\rho_k$ given by Theorem \ref{theorem:upper_gradient} and corresponding to the set $X=G_k$. If $m_n(G)=0$, then $\rho_k=0$ a.e.\ by part \ref{theorem:upper_gradient:i}. If $m_n(f(G))=0$, then $\rho_k=0$ a.e.\ by part \ref{theorem:upper_gradient:iii}. In both cases, $\rho_k=0$ a.e.  By property \ref{m:zero_ae}, this implies that there exists a path family $\Gamma_k$ of $n$-modulus zero such that for $\gamma\notin \Gamma_k$ we have
$$\int_{\gamma}\rho_k\, ds=0.$$
Combining this with part \ref{theorem:upper_gradient:ii} of Theorem \ref{theorem:upper_gradient}, we see that there exists a path family $\Gamma_k'$ with $n$-modulus zero such that 
$$\mathscr H^1( f(|\gamma|\cap G_k))=\mathscr H^1_{\infty}( f(|\gamma|\cap G_k))=0$$
for all paths $\gamma\notin \Gamma_k'$ with trace in $\Omega$.  The desired path family is $\Gamma_0=\bigcup_{k=1}^\infty \Gamma_k'$.
\end{proof}

With the aid of Corollary \ref{corollary:upper_gradient} one can immediately deduce Theorem \ref{theorem:main} from the following slightly more general statement.

\begin{theorem}\label{theorem:main:generalization}
Let $\Omega\subset \R^n$ be an open set and $f\colon \Omega \to \R^n$ be a topological embedding. Let $A,G\subset \Omega$ be  sets such that $A\in \CNED$ and assume that there exists a path family $\Gamma_0$ with $\md_n \Gamma_0=0$ such that for all paths $\gamma\notin\Gamma_0$ with trace in $\Omega$ we have 
	$$\mathscr H^1( f(|\gamma|\cap G))=0.$$
Suppose that there exists a constant $H\geq 1$ such that for all $x\in \Omega\setminus (A\cup G)$ we have
\begin{align*}
 E_f(x) \leq H,
\end{align*}
and for all $x\in G$ we have
\begin{align*}
 E_f(x)<\infty.
\end{align*}
Then $f$ is $K$-quasiconformal in $\Omega$, for some $K\geq 1$ depending only on $n$ and $H$.
\end{theorem}

We finally focus on proving Theorem \ref{theorem:main:generalization}. We require the next lemma on maps that are absolutely continuous along paths. 
 
\begin{lemma}[\cite{Vaisala:quasiconformal}*{Theorem 5.3}]\label{lemma:path_change_variables}
Let $\Omega\subset \R^n$ be an open set, $f\colon \Omega\to \R^n$ be a continuous map, $\rho_f\colon \Omega\to [0,\infty]$ be a Borel function, and $\gamma\colon [a,b]\to \Omega$ be a rectifiable path. Suppose that for every interval $[s,t]\subset [a,b]$ we have
$$|f(\gamma(t))-f(\gamma(s))| \leq \int_{\gamma|_{[s,t]}} \rho_f\, ds<\infty.$$
Then for every Borel function $\rho\colon f(\Omega)\to [0,\infty]$ we have 
$$\int_{f\circ \gamma}\rho\, ds \leq \int_{\gamma} (\rho \circ f) \cdot \rho_f\, ds.$$
\end{lemma}

A \textit{topological ring} $R$ is a bounded open set in $\R^n$ whose boundary has two components, say $F_1$ and $F_2$. For a topological ring $R$ we denote by  $\Gamma(R)$ the family of curves joining $F_1$ and $F_2$ in $R$; that is, the curves of $\Gamma(R)$ are contained in $R$, except for the endpoints, which lie in different components of $\partial R$.  We will use the fact that if $\mathcal F$ is a family of curves that is closed under subpaths, then 
\begin{align*}
\md_n(\Gamma(R)\cap \mathcal F)= \md_n(\Gamma(F_1,F_2;\R^n)\cap \mathcal F).
\end{align*}
In particular, this is true if $\mathcal F=\mathcal F_{\sigma}(A)$ for some set $A$; recall that $\mathcal F_{\sigma}(A)$ is the family of curves meeting $A$ at countably many points. Hence, if $A\in \CNED$, 
\begin{align}\label{equality:modulus_cned}
\begin{aligned}
\md_n(\Gamma(R)\cap \mathcal F_{\sigma}(A))&=  \md_n(\Gamma(F_1,F_2;\R^n)\cap \mathcal F_{\sigma}(A))\\ &= \md_n\Gamma(F_1,F_2;\R^n) = \md_n\Gamma(R).
\end{aligned}
\end{align}

In order to show that the map $f$ of Theorem \ref{theorem:main:generalization} is quasiconformal, we will use the ring definition of quasiconformality as stated in the next theorem. 

\begin{theorem}[\cite{Vaisala:quasiconformal}*{Theorem 36.1}]\label{theorem:ring_definition}
Let $\Omega\subset \R^n$ be an open set and $f\colon \Omega\to \R^n$ be a topological embedding. If there exists $K\geq 1$ such that for each topological ring $R\subset \br R\subset\subset \Omega$ we have $\md \Gamma(R) \leq K \md \Gamma(f(R))$, then $f$ is $K$-quasiconformal. 
\end{theorem}

\begin{proof}[Proof of Theorem \ref{theorem:main:generalization}]
We apply Theorem \ref{theorem:upper_gradient} with $X=\Omega\setminus (A\cup G)$. Denote by $\Gamma_0'$ the union of the exceptional path families given by Theorem \ref{theorem:upper_gradient} \ref{theorem:upper_gradient:ii} and by the statement of Theorem \ref{theorem:main:generalization}, and note that $\md_n\Gamma_0'=0$.  By Theorem \ref{theorem:upper_gradient}, there exists a Borel function $\rho_f\colon \Omega\to [0,\infty]$ with $\rho_f\in L^n_{\loc}(\Omega)$ such that for all paths $\gamma\notin \Gamma_0'$ with trace in $\Omega$ we have 
\begin{align}\label{proof:upper_gradient}
\h^1_{\infty}(f(|\gamma|\setminus A)) =\h^1_{\infty}(f(|\gamma|\cap (\Omega\setminus (A\cup G))))  \leq \int_{\gamma}\rho_f\, ds
\end{align}
and 
$$\int_V \rho_f^n \leq C(n,H) m_n(f(V))$$
for each Borel set $V\subset \Omega$. The latter implies that for every Borel function $\rho\colon f(\Omega)\to [0,\infty]$ we have
\begin{align}\label{proof:quasiconformality}
\int (\rho\circ f)\cdot  \rho_f^n \leq C(n,H) \int \rho.
\end{align}
By enlarging the exceptional family $\Gamma_0'$, still requiring that $\md_n\Gamma_0'=0$, we may assume that it has the additional properties that
$$\int_{\gamma}\rho_f\, ds<\infty$$
for each $\gamma\notin \Gamma_0'$ with trace in $\Omega$ and that if $\gamma\notin \Gamma_0'$ then all subpaths of $\gamma$ have the same property; see properties \ref{m:subpath} and \ref{m:integrable}. 

Let $R\subset \br R\subset \subset  \Omega$ be a topological ring. Let $\gamma\colon[a,b]\to \Omega$ be a path in $(\Gamma(R)\cap \mathcal F_{\sigma}(A))\setminus \Gamma_0'$. By \eqref{proof:upper_gradient}  and the fact that every subpath of $\gamma$ lies outside $\Gamma_0'$ we have
\begin{align*}
|f(\gamma(t))-f(\gamma(s))| &\leq \diam(f(\gamma([s,t]))) = \h^1_{\infty}(f(\gamma([s,t]))) \\
&=\h^1_{\infty}(f(\gamma([s,t])\setminus A)) \leq \int_{\gamma|_{[s,t]}}\rho_f\, ds<\infty
\end{align*}
for every interval $[s,t]\subset [a,b]$. Let $\rho\colon f(\Omega)\to [0,\infty]$ be admissible for $\Gamma(f(R))$. Then by Lemma \ref{lemma:path_change_variables} we have
$$ \int_{\gamma}(\rho\circ f) \cdot \rho_f\, ds\geq \int_{f\circ \gamma}\rho \, ds \geq 1. $$
This shows that $(\rho\circ f)\cdot \rho_f$ is admissible for $(\Gamma(R)\cap \mathcal F_{\sigma}(A))\setminus \Gamma_0'$, so using \eqref{proof:quasiconformality}, we arrive at
\begin{align*}
\md_n (\Gamma(R)\cap \mathcal F_{\sigma}(A))&=\md_n ((\Gamma(R)\cap \mathcal F_{\sigma}(A))\setminus \Gamma_0')\\
&\leq \int (\rho\circ f)^n \cdot \rho_f^n \leq C(n,H) \int \rho^n. 
\end{align*} 
We conclude that
\begin{align*}
\md_n (\Gamma(R)\cap \mathcal F_{\sigma}(A))\leq C(n,H)\md_n \Gamma(f(R)). 
\end{align*}
Finally, the assumption that $A\in \CNED$ and \eqref{equality:modulus_cned} imply that
$$\md_n (\Gamma(R)\cap \mathcal F_{\sigma}(A))= \md_n \Gamma(R).$$
An application of Theorem \ref{theorem:ring_definition} completes the proof. 
\end{proof}

\subsection*{Data availability}
The paper has no associated data.
\subsection*{Conflict of interest}
The author states that there is no conflict of interest. 

\bibliography{biblio}
\end{document}